\theoremstyle{definition}
\newtheorem{theorem}{Theorem}[section]
\newtheorem{lemma}[theorem]{Lemma}
\newtheorem{proposition}[theorem]{Proposition}
\newtheorem{fact}[theorem]{Fact} 
\newtheorem{observation}[theorem]{Observation} 
\theoremstyle{definition}
\newtheorem{definition}[theorem]{Definition}
\newtheorem{example}[theorem]{Example}
\newtheorem{pclaim}[theorem]{Claim}
\newtheorem*{ac}{Acknowledgments} 
\theoremstyle{remark}
\newtheorem{remark}[theorem]{Remark}
\newenvironment{rmenum}{
\begin{enumerate}

}
{\end{enumerate}}
\newcommand{\distgtf}[5]{\lambda(#4, #5; #3; #1, #2)}
\newcommand{\parcut}[2]{\delta_{#1}(#2)}
\newcommand{\critical}[1]{\mathcal{G}(#1)}
\newcommand{\fcr}[1]{\mathcal{L}(#1)} 
\newcommand{\fcgtr}[1]{\mathcal{L}^{+}(#1)}
\title{Constructive Characterization of Critical Bipartite Grafts}
\author{Nanao Kita}
\address{Furo-cho, Chikusa-ku, Nagoya, 464-8601, Japan}
\email{kita@math.nagoya-u.ac.jp}
\date{\today}
\begin{document}

\begin{abstract} 
Factor-critical graphs are a classical concept in matching theory 
 that constitute  an important component of  the Gallai-Edmonds canonical decomposition and Edmonds' algorithm for maximum matchings. 
Lov\'asz provided a constructive characterization of factor-critical graphs in terms of ear decompositions. 
This characterization has been a useful inductive tool for studying factor-critical graphs 
and also connects them with Edmonds' algorithm.

Joins in grafts, also known as $T$-joins in graphs, are a classical variant of matchings proposed in terms of parity. 
Minimum joins and grafts are generalizations of perfect matchings and graphs with perfect matchings, respectively. 
Accordingly, 
  graft analogues of fundamental concepts and results from matching theory, such as canonical decompositions, 
  will be fundamental contributions to the theory of minimum join. 
In this paper, we propose a new concept of {\em critical quasicombs}  
as a bipartite graft analogue of factor-critical graphs and 
provide a constructive characterization of critical quasicombs using a graft version of ear decompositions. 
This characterization can be considered as a bipartite graft analogue of Lov\'asz' result.  
From our results, the Dulmage-Mendelsohn canonical decomposition, 
originally a theory for bipartite graphs,  has been generalized for bipartite grafts. 
\end{abstract}

\maketitle

\section{Introduction}

\subsection{Joins and Grafts}

Joins in grafts, or $T$-joins in graphs, are a classical topic in combinatorics~\cite{lp1986, kv2008, schrijver2003}. 
Let $G$ be a graph, and let $T$ be a set of vertices.  A set $F$ of edges is a $T$-{\em join} of $G$ if every vertex in $T$ is connected to an odd number of edges from $F$, 
whereas other vertices are connected to an even number of edges from $F$.     
A $T$-join of graph $G$ is also referred to as a {\em join} of the pair $(G, T)$. 
The pair $(G, T)$ is called a {\em graft} if every connected component of $G$ has an even number of vertices from $T$.

Minimum joins in a graft are a generalization of $1$-factors  (perfect matchings) in a {\em factorizable} graph, that is, a graph with $1$-factors.  
A pair $(G, T)$ of a graph $G$ and a set of vertices $T$ has a join if and only if it is a graft.  
Hence, minimum joins in grafts are typically of interest. 
Let $(G, T)$ be a graft where $G$ is a factorizable graph and $T$ is the vertex set of $G$. 
Then, a set of edges is a minimum join of $(G, T)$ if and only if it is a $1$-factor of $G$.  
That is, grafts and minimum joins are generalizations of factorizable graphs and  $1$-factors, respectively. 

In addition, the minimum join problem on  grafts contains several types of routing problems on (undirected) graphs 
such as the shortest path problem between two vertices and the Chinese postman problem~\cite{lp1986, schrijver2003}. 
Joins on grafts are also closely related to  classical unsolved problems in graph theory 
such as Tutte's $4$-flow conjecture and the circuit double cover conjecture~\cite{schrijver2003}. 
Compared with $1$-factors, joins are rather poorly understood even though they have attracted considerable attention. 
Joins implicate significantly complicated and esoteric notions, as is often the case with generalized concepts involving parity.

\subsection{Toward  Canonical Decomposition Theory for Joins in Grafts} 
\subsubsection{Canonical Decompositions in Matching Theory} 

Canonical decompositions of a graph are a series of structure theorems that constitute the foundation of matching theory~\cite{lp1986}. 
Several types of canonical decompositions are known for matchings, 
such as the {\em Gallai-Edmonds}~\cite{gallai1964, edmonds1965}, {\em Kotzig-Lov\'asz}~\cite{kotzig1959a, kotzig1959b, kotzig1960, lovasz1972b, kitacathedral, DBLP:conf/isaac/Kita12}, {\em Dulmage-Mendelsohn}~\cite{dm1958, dm1959, dm1963}, and {\em cathedral decompositions}~\cite{kitacathedral, DBLP:conf/isaac/Kita12}. 
Each canonical decomposition has its own contexts or class of graphs where it can be effectively applied,   
but there are several common features. 
For a given graph, a canonical decomposition defines a uniquely determined partition, 
and uses this partition to characterize the structure of all maximum matchings in the graph. 
This feature makes  canonical decompositions powerful tools in studying matchings that are employed in many contexts. 
Following canonical decompositions, 
a concept from matching theory is said to be canonical if it is uniquely determined for a graph.

\subsubsection{Factor-Components and Canonical Decompositions for Factorizable Graphs} \label{sec:canonical:factorizable} 

Factor-components are the fundamental building blocks of a factorizable graph in the context of matching theory~\cite{lp1986, kitacathedral, DBLP:conf/isaac/Kita12}.  
A factorizable graph consists of disjoint subgraphs called {\em factor-components} and the edges between them. 
A set of edges is a $1$-factor if and only if it is a union of $1$-factors taken from each factor-component, 
and the factor-components form the most fine-grained set of subgraphs that satisfies this property.

The Dulmage-Mendelsohn and cathedral decompositions are canonical decompositions that are substantially applicable to 
bipartite and nonbipartite factorizable graphs, respectively;  
  these decompositions are  structure theorems that characterize the organization of a graph from its factor-components.    
In these decompositions,  the relationship between factor-components is characterized 
by defining a certain canonical binary relation between them  
and then proving that this relation is in fact a partial order. 
These partial orders are called the {\em Dulmage-Mendelsohn} and {\em cathedral orders}. 
\footnote{More precisely, the cathedral decomposition is a composite theory 
in which the cathedral order is a constituent. The {\em general} Kotzig-Lov\'asz decomposition is another constituent. } 

Note that the cathedral decomposition is not a generalization of the Dulmage-Mendelsohn decomposition. 
The former is by definition applicable to all graphs including bipartite graphs. 
However, it is substantially applicable to nonbipartite graphs because no distinct factor-components in a bipartite graph can be compatible 
with respect to the cathedral order.  
The Dulmage-Mendelsohn decomposition is established on the premise of  the two color classes, 
and can nontrivially apply to bipartite graphs.

\subsubsection{Extension to Grafts} 

Because minimum joins in grafts are a generalization of $1$-factors in factorizable graphs, 
 a canonical decomposition theory for grafts will  be the foundation for studying minimum joins.    
 The Kotzig-Lov\'asz, Dulmage-Mendelsohn, and cathedral decompositions are canonical decompositions for factorizable graphs.  
 Among them, however, only the Kotzig-Lov\'asz decomposition has been generalized for grafts~\cite{sebo1987dual, kita2017parity}.

Under the analogy and the observation from Section~\ref{sec:canonical:factorizable},  
generalizations of the Dulmage-Mendelsohn and cathedral decompositions for grafts should be structure theorems 
that  characterize the canonical relationship between factor-components in a bipartite and nonbipartite graft, respectively. 
The concept of factor-components can easily be generalized and defined for grafts~\cite{kita2017parity}. 
 Bipartite and  nonbipartite grafts are also easily defined as 
grafts whose graph part is bipartite and nonbipartite, respectively. 
Reasonable generalizations of the Dulmage-Mendelsohn and cathedral decompositions 
would define a canonical binary relation between factor-components 
and then prove that this relation is a partial order for bipartite and nonbipartite grafts, respectively.

\subsection{Factor-Critical Graphs}

\subsubsection{Basics} 

Factor-critical graphs are a classical concept in matching theory~\cite{lp1986, kv2008, schrijver2003}.  
A graph is {\em factor-critical} if, for every vertex, deleting it results in a factorizable graph. 
It is easily confirmed from the definition that no factor-critical graph can be bipartite. 
Factor-critical graphs were proposed and appeared in the Gallai-Edmonds decomposition when Gallai~\cite{gallai1964} discovered this decomposition. 
In this decomposition, first, a partition of the vertex set into three parts is defined.  
Then, it is proved that the first part, which is defined as the set of vertices that may not be covered by a maximum matching,  
induces a subgraph whose connected components are factor-critical. 
From the connection between the decomposition and Edmonds' algorithm for maximum matchings~\cite{edmonds1965, lp1986, kv2008, schrijver2003}, 
this also implies that factor-critical graphs are an important element of this algorithm. 
That is, the so-called blossoms are in fact factor-critical graphs. 
Because the Gallai-Edmonds decomposition is quite a useful theorem and  employed in numerous contexts in matching theory, 
factor-critical graphs are also involved in many situations. 

\subsubsection{Lov\'asz' Constructive Characterization of Factor-Critical Graphs} 

Lov\'asz~\cite{lovasz1972note, lp1986, kv2008, schrijver2003} discovered a constructive characterization of factor-critical graphs in terms of ear decompositions.  
An {\em ear decomposition} is a way of interpreting a graph as being constructed from a single vertex by sequentially adding paths or circuits called {\em ears}.  
Lov\'asz proved that a graph is factor-critical  
if and only if it admits an ear decomposition in which every ear is {\em odd}, that is, has an odd number of edges. 
This characterization of factor-critical graphs has served as a very useful inductive tool for proving properties of factor-critical graphs.   
It also connects factor-critical graphs and blossoms from Edmonds' algorithm. 

\subsection{Factor-Critical Graphs and Cathedral Decomposition} 

Factor-critical graphs also play an important role in the cathedral decomposition theory~\cite{kitacathedral, DBLP:conf/isaac/Kita12}.  
That is, the cathedral order is defined using factor-critical graphs. 
In a factorizable graph $G$, 
a factor-component $C$ is greater than a factor-component $D$ 
if the graph obtained from $G$ by contracting $D$ has a certain factor-critical subgraph that contains $C$. 
More precisely,  $C$ is greater than $D$ 
 if $G$ has factor-components  in which $C$ and $D$  are included 
 such that the subgraph of $G$ induced by the set of vertices from these factor-components 
 becomes  factor-critical after $D$ is contracted.   
The cathedral decomposition has been applied to derive further results in matching theory~\cite{kita2014alternative, kita2015graph,  kita2012canonical, DBLP:conf/cocoa/Kita13, DBLP:conf/cocoa/Kita17}   
such as new proofs of the tight cut lemma by Edmonds et al.~\cite{elp1982} and the saturated cathedral theorem by Lov\'asz~\cite{lovasz1972b} 
as well as 
a characterization of the family of maximal {\em barriers}~\cite{lp1986}, the dual of maximum matchings under the Tutte-Berge theorem, in general graphs.

\subsection{Our Aim and Results} 

\begin{figure} 
\includegraphics[width=.8\textwidth]{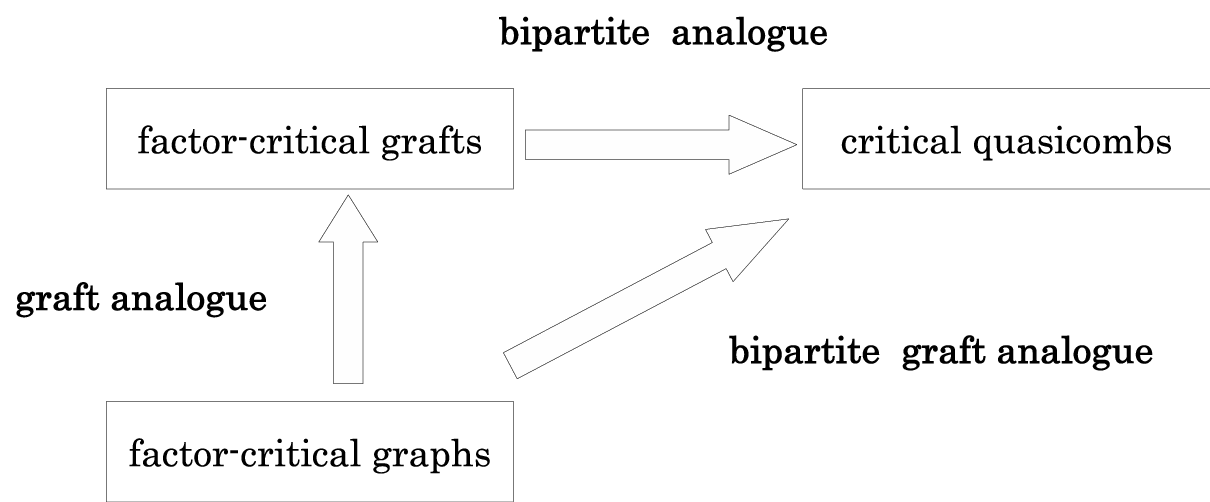} 
\caption{Relationships among factor-critical graph, factor-critical grafts, and critical quasicombs.} 
\label{fig:chart}
\end{figure}

We propose a new concept called {\em critical quasicomb} grafts that is a bipartite graft analogue of factor-critical graphs  
and provide a constructive characterization of these grafts 
 for the purpose of deriving a generalization of the Dulmage-Mendelsohn decomposition for bipartite grafts. 
From the analogy between grafts and factorizable graphs, 
we can naturally define {\em factor-critical grafts}, that is, a graft analogue of factor-critical graphs; see Figure~\ref{fig:chart}.  
Somewhat surprisingly, Seb\"o's distance theorem~\cite{DBLP:journals/jct/Sebo90} 
implies that a graft $(G, T)$ is factor-critical 
if and only if $G$ is a factor-critical graph and $T$ consists of all vertices except for one. 
As with factor-critical graphs, no factor-critical graft can be bipartite; every vertex in a factor-critical graft has a symmetrical role. 
In this paper, we further propose a bipartite analogue of factor-critical grafts: critical quasicombs. 
Criticality and bipartiteness are naturally and reasonably integrated in critical quasicombs.

Furthermore, we provide a constructive characterization of critical quasicombs that is an analogue of 
Lov\'asz' characterization of factor-critical graphs.  
First, we define the addition of grafts and the graft analogue of the concept of ears; 
these enable us to define ear decompositions of grafts. 
We then propose  types of ear grafts, which we term {\em effective} ear grafts, as the bipartite graft analogue of odd ears.  
We thus prove that a bipartite graft is a critical quasicomb  
if and only if it admits a graft ear decomposition in which every ear is effective. 
The proof is given from first principles.

\subsection{Consequence of Our Results} 

In our next paper~\cite{kita2021bipartite}, we use the results in this paper to derive a generalization of the Dulmage-Mendelsohn decomposition for bipartite grafts.  
This new decomposition characterizes the relationship between factor-components in a bipartite graft 
as a certain partial order.  
If a graft is a pair of a bipartite factorizable graph $G$ and  its vertex set, 
 then the generalized decomposition for this graft coincides with the original Dulmage-Mendelsohn decomposition for $G$.

This new decomposition can also be referred to as a bipartite graft analogue of the cathedral decomposition.\footnote{Note that there is an analogical relationship between the two, and one is not a generalization of the other. }
The  partial order between factor-components in a bipartite graft is similar to the cathedral order  
except that critical quasicombs replace the role of factor-critical graphs.   
That is,  in this new decomposition,  
factor-component $C$ is defined to be greater than factor-component $D$ 
if there are factor-components in which $C$ and $D$ are included 
such that the subgraph induced by the set of vertices from these factor-components becomes a critical quasicomb after $D$ is contracted. 
Then, it is proved that this binary relation is a partial order 
 where  our constructive characterization of critical quasicombs is used for the derivation.  
From this new decomposition, 
we can also understand the relationship between the original Dulmage-Mendelsohn and cathedral decompositions for factorizable graphs  
as the former being a bipartite analogue of the latter. 

\subsection{Organization of This Paper} 

Section~\ref{sec:definition}  provides the basic definitions and notations.  
Sections~\ref{sec:graft} and \ref{sec:distance}  explain the fundamentals regarding joins and grafts. 
Sections~\ref{sec:fc} and \ref{sec:fcgraft} 
 present preliminaries regarding factor-critical graphs and grafts, respectively. 
In Section~\ref{sec:bipartite},  
we introduce the concept of  bipartite grafts with an ordered bipartition to 
facilitate the subsequent discussions on  bipartite grafts. 
In Section~\ref{sec:comb}, 
we explain a class of bipartite grafts known as {\em combs} 
and an existing result to be used in Section~\ref{sec:crqcomb}. 
In Section~\ref{sec:quasicomb}, we introduce a new concept of {\em quasicomb} bipartite grafts.   
In Section~\ref{sec:crqcomb}, 
we define {\em critical} bipartite grafts, that is, the bipartite graft analogue of factor-critical graphs. 
Here, we show that critical bipartite grafts are in fact special quasicombs and call critical bipartite grafts as critical quasicombs onwards. 
In Section~\ref{sec:ear}, 
we introduce effective ear grafts as the components for constructing critical quasicombs. 
In Section~\ref{sec:main},  
our main theorem, namely, 
the constructive characterization for critical bipartite grafts is proved. 
Finally, Section~\ref{sec:conclusion} concludes the paper.

\section{Definitions} \label{sec:definition}

\subsection{Basic Notation} 
We mostly follow Schrijver~\cite{schrijver2003} for standard notation and definitions.  
In the following, we list exceptions or nonstandard definitions.  
For two sets $A$ and $B$, we denote the symmetric difference $(A\setminus B) \cup (B\setminus A)$ by $A\Delta B$. 
We often denote a singleton $\{x\}$ simply by $x$. 
For a graph $G$, we denote the vertex and edge sets of $G$ by $V(G)$ and $E(G)$, respectively.  
We consider multigraphs. For two vertices $u$ and $v$ from a graph, $uv$ denotes an edge between $u$ and $v$. 
We consider paths and circuits as graphs. 
That is, a circuit is a connected graph in which every vertex is of degree two, 
whereas a path is a connected graph in which every vertex is of degree two or less, and at least one vertex is of degree less than two. 
A graph with a single vertex and no edge is a path.   
For a path $P$ and vertices $x, y \in V(P)$, 
we denote by $xPy$ the subpath of $P$ with ends $x$ and $y$. 
We often treat a graph $G$ as if it is the set $V(G)$ of vertices. 
A perfect matching of a graph $G$ is a set $M$ of edges such that 
every vertex of $G$ is connected to exactly one edge from $M$.

In the remainder of this section, let $G$ be a graph unless stated otherwise. 
Let $X\subseteq V(G)$. 
The graph obtained by deleting $X$ is denoted by $G - X$.  
For $X, Y \subseteq V(G)$, 
we denote by $E_G[X, Y]$ the set of edges between $X$ and $Y$. 
The set $E_G[X, V(G)\setminus X]$ is denoted by $\parcut{G}{X}$.

Let $G_1$ and $G_2$ be subgraphs of $G$. 
The sum of $G_1$ and $G_2$ is denoted by $G_1 + G_2$. 
Let $F\subseteq E(G)$.  For a subgraph $H$ of $G$, 
$H + F$ denotes the graph obtained by adding $F$ to $H$. 
The subgraph of $G$ determined by $F$ is denoted by $G. F$. That is, $G. F = (V(G), F)$.

\subsection{Ears} 

Let $G$ be a graph, and let $X\subseteq V(G)$. 
Let $P$ be a subgraph of $G$ with $E(P)\neq \emptyset$. 
We call  $P$  a {\em round ear} relative to $X$ 
if $P$ is a path  whose vertices except for its ends are disjoint from $X$ or a circuit that shares exactly one vertex with $X$. 
We call  $P$ a {\em straight ear} relative to $X$ 
if $P$ is a path one of whose ends is the only vertex that $P$ shares with $X$. 
We call $P$ an {\em ear} if $P$ is either a round or straight ear. 
For an ear $P$ relative to $X$,   
vertices in $V(P)\cap X$ are called  {\em bonds } of $P$. 
We call the end of a straight ear that is not the bond the {\em free end}.

\section{Joins and Grafts} \label{sec:graft} 
\subsection{Basics} 

Let $(G, T)$ be a pair of a graph $G$ and a set $T\subseteq V(G)$. Set $T$ may be empty. 
A set $F\subseteq E(G)$ is a {\em join} of $(G, T)$ 
if $|\parcut{G}{v}\cap F|$ is odd for each $v\in T$ and is even for each $v\in V(G)\setminus T$. 
The pair $(G, T)$ is a {\em graft} if  $|V(C)\cap T|$ is even  for every connected component  $C$ of $G$. 

\begin{fact} \label{fact:graft} 
Let  $G$ be a graph,  and let $T\subseteq V(G)$. 
Then, $(G, T)$ has a join if and only if it is a graft. 
\end{fact}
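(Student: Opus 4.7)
The plan is a standard parity counting argument in both directions, with the reverse direction relying on a symmetric-difference construction along paths between paired $T$-vertices.

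For the ``only if'' direction, assume $F$ is a join of $(G,T)$ and let $C$ be a connected component of $G$. Because $C$ is a component, every edge incident to a vertex of $C$ lies in $E(C)$, so I would compute
\[
\sum_{v \in V(C)} |\parcut{G}{v} \cap F| \;=\; 2\,|F \cap E(C)|,
\]
which is even. On the other hand, by the defining parity conditions of a join, this sum is congruent modulo $2$ to $|V(C) \cap T|$. Equating the two gives that $|V(C) \cap T|$ is even, so $(G,T)$ is a graft.

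For the ``if'' direction, assume $(G,T)$ is a graft. I would treat each connected component $C$ separately. Since $|V(C) \cap T|$ is even, partition the vertices of $V(C) \cap T$ into pairs $\{s_1, t_1\}, \ldots, \{s_k, t_k\}$. As $C$ is connected, for each $i$ there exists a path $P_i$ in $C$ from $s_i$ to $t_i$. Set
\[
F_C \;=\; E(P_1) \,\Delta\, E(P_2) \,\Delta\, \cdots \,\Delta\, E(P_k),
\]
and let $F$ be the union of the sets $F_C$ taken over all components. For a fixed vertex $v$, $|\parcut{G}{v} \cap E(P_i)|$ is odd precisely when $v$ is an endpoint of $P_i$ and even otherwise, so the parity of $|\parcut{G}{v} \cap F|$ equals the number of pairs in which $v$ appears as an endpoint, taken modulo $2$. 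By construction this count is $1$ if $v \in T$ and $0$ otherwise, so $F$ is a join of $(G,T)$.

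The only subtle point is verifying that the parity for each $v$ behaves additively under symmetric difference of edge sets, which is standard: the indicator function of $F$ modulo $2$ equals the sum modulo $2$ of the indicator functions of the $E(P_i)$, and degrees at $v$ modulo $2$ depend only on these indicator values. I do not anticipate any genuine obstacle here; the argument is entirely elementary and amounts to the standard observation that $T$-joins in a connected graph exist iff $|T|$ is even.
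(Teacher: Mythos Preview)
Your argument is correct and is the standard proof of this well-known fact. Note that the paper itself states this as a \textbf{Fact} without proof, treating it as folklore; your write-up supplies exactly the elementary parity-counting and path-symmetric-difference construction one finds in the references the paper cites (e.g., Lov\'asz--Plummer, Schrijver), so there is nothing to compare against.
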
 

Joins in a graft are also referred to as $T$-joins in a graph. 
Under Fact~\ref{fact:graft}, we are typically interested in {\em minimum joins}, 
that is, joins with the minimum number of edges. 
We denote  by $\nu(G, T)$ the number of edges in a minimum join of a graft $(G, T)$. 
According to the next fact, minimum joins and  grafts are a generalization of perfect matchings and  graphs with perfect matchings~\cite{lp1986}.  
\begin{fact} 
Let $G$ be a graph with perfect matchings. 
Then, a set $M\subseteq E(G)$ is a perfect matching of $G$ 
if and only if it is a minimum join of the graft $(G, V(G))$.  
\end{fact}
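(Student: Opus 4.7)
The plan is to derive both directions from a single degree-counting bound together with the existence hypothesis. The forward direction is almost definitional: if $M$ is a perfect matching of $G$, then every vertex $v\in V(G)$ is incident to exactly one edge of $M$, so $|\parcut{G}{v}\cap M|=1$, which is odd. Taking $T=V(G)$, this means $M$ is a join of $(G,V(G))$. Moreover $|M|=|V(G)|/2$.

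For the converse, I would first establish the uniform lower bound $|F|\ge |V(G)|/2$ for every join $F$ of $(G,V(G))$. This is just the handshake identity: for each $v\in V(G)$ the parity requirement forces $|\parcut{G}{v}\cap F|\ge 1$, so
\[
2|F|=\sum_{v\in V(G)}|\parcut{G}{v}\cap F|\ \ge\ |V(G)|.
\]
Next I would use the hypothesis that $G$ has a perfect matching $M_0$: by the forward direction above, $M_0$ is a join of $(G,V(G))$ of size $|V(G)|/2$, so $\nu(G,V(G))\le |V(G)|/2$. Combining this with the lower bound gives $\nu(G,V(G))=|V(G)|/2$.

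From this equality I can close both directions. For any minimum join $F$ we have $|F|=|V(G)|/2$, so the degree-counting inequality above must be tight at every vertex; that is, $|\parcut{G}{v}\cap F|=1$ for all $v\in V(G)$, which is the definition of a perfect matching. Conversely, a perfect matching $M$ is a join of the right cardinality $|V(G)|/2=\nu(G,V(G))$, hence a minimum join. There is no real obstacle here; the only subtlety worth flagging is that the inequality $\nu(G,V(G))\le |V(G)|/2$ genuinely requires the existence hypothesis, since without it the minimum join cardinality can exceed $|V(G)|/2$ and minimum joins need not be matchings at all.
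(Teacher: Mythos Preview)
Your argument is correct and complete. The paper states this result as a \emph{Fact} without proof, so there is no proof in the paper to compare against; the degree-counting argument you give---handshake to obtain $2|F|=\sum_v|\parcut{G}{v}\cap F|\ge |V(G)|$, then tightness from the existence of a perfect matching---is exactly the standard justification one would supply, and your closing remark about the necessity of the existence hypothesis (as witnessed, e.g., by $K_{1,3}$) is apt.
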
 

We often treat items of $G$ as items of $(G, T)$. 
For example, we call a path in $G$ a path in $(G, T)$. 
A graft $(G, T)$ is {\em bipartite} if $G$ is bipartite. 
We call sets $A$ and $B$ color classes of $(G, T)$ if these sets are color classes of $G$.

\subsection{Addition of Grafts} 

Let $(G_1, T_1)$ and $(G_2, T_2)$ be grafts. 
We define the sum of $(G_1, T_1)$ and $(G_2, T_2)$ as 
the graft $(G_1 + G_2, T_1\Delta T_2)$ and denote this graft by $(G_1, T_1)\oplus(G_2, T_2)$.

\begin{observation} \label{obs:addition}
If $F_1$ and $F_2$ are joins of grafts $(G_1, T_1)$ and $(G_2, T_2)$, respectively, 
then $F_1 \cup F_2$ is a join of the graft $(G_1, T_1)\oplus(G_2, T_2)$. 
\end{observation}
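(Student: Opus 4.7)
The plan is to verify the defining parity condition of a join pointwise at each vertex of $G_1 + G_2$. Since the paper works in the multigraph setting, the edges of $G_1 + G_2$ incident to any vertex $v$ partition into those inherited from $G_1$ and those inherited from $G_2$, with any edges lying in both retained with multiplicity. Consequently
$$|\parcut{G_1+G_2}{v} \cap (F_1 \cup F_2)| = |\parcut{G_1}{v} \cap F_1| + |\parcut{G_2}{v} \cap F_2|,$$
where a summand on the right is understood to be zero whenever $v \notin V(G_i)$.

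Next I would split on the membership of $v$ in $T_1$ and $T_2$. Because $F_i$ is a join of $(G_i, T_i)$, the $i$-th summand is odd precisely when $v \in T_i$. Hence the left-hand side is odd precisely when exactly one of $v \in T_1$, $v \in T_2$ holds, that is, precisely when $v \in T_1 \Delta T_2$. This is exactly the condition for $F_1 \cup F_2$ to be a join of $(G_1+G_2,\, T_1 \Delta T_2) = (G_1, T_1)\oplus(G_2, T_2)$, as claimed. (As a byproduct, together with Fact~\ref{fact:graft} this also confirms that the sum of two grafts is itself a graft, so the notation $(G_1, T_1)\oplus(G_2, T_2)$ is well posed.)

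There is no serious obstacle: the argument is routine bookkeeping via parity. The only point that requires care is the convention for the sum of graphs, since if $G_1$ and $G_2$ happened to share an edge then a naive union interpretation could collapse that edge and break the additivity of edge counts. Under the multigraph convention adopted in the paper, this collapse does not occur, and the additivity displayed above is literally valid rather than merely valid up to coincidence.
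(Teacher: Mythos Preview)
Your argument is correct. The paper states Observation~\ref{obs:addition} without proof, so there is nothing to compare against beyond noting that your direct parity check is exactly the routine verification one would supply; your explicit discussion of the multigraph convention for $G_1+G_2$ is a welcome precision, since without it the displayed additivity of incidence counts (and indeed the observation itself) could fail when $E(G_1)\cap E(G_2)\neq\emptyset$.
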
 

Note that $F_1 \cup F_2$ may not be a minimum join even if $F_1$ and $F_2$ are minimum joins of $(G_1, T_1)$ and $(G_1, T_2)$. 

\subsection{Ear Grafts} 

Let $(G, T)$ and $(P, T')$ be grafts. 
Graft $(P, T')$ is an {\em ear graft} relative to $(G, T)$ if $P$ is an ear relative to $G$. 
Vertices of $P$ are called the {\em bonds} of $(P, T')$ if they are bonds of $P$. 
If $P$ is straight, the free end of $P$ is called the {\em free end} of $(P, T')$. 
We say that ear graft $(P, T')$ is {\em round} or {\em straight} if $P$ is round or straight, respectively.

\section{Edge Weighting and Distances on Grafts} \label{sec:distance} 

In this section, we explain a certain edge weight on a graft typically determined by a minimum join 
and the distances between vertices with respect to this edge weight.

\begin{definition} 
Let $(G, T)$ be a graft. Let $F \subseteq E(G)$. 
We define $w_F: E(G)\rightarrow \{1, -1\}$ as  
$w_F(e) = 1$ for $e\in E(G)\setminus F$ and $w_F(e) = -1$ for $e\in F$. 
For a subgraph $P$ of $G$, which is typically a path or circuit, 
 $w_F(P)$ denotes $ \Sigma_{ e \in E(P) } w_F(e)$, 
 and is referred to as the $F$-{\em weight} of $P$. 
For $u,v\in V(G)$, 
a path between $u$ and $v$ with the minimum $F$-weight is said to be {\em $F$-shortest} between $u$ and $v$.  
The $F$-weight of an $F$-shortest path between $u$ and $v$ is referred to as 
the $F$-{\em distance} between $u$ and $v$, 
and is denoted by $\distgtf{G}{T}{F}{u}{v}$. 
Regarding these terms, we sometimes omit the prefix ``$F$-'' if the meaning is apparent from the context. 
\end{definition}

The next proposition characterizes $F$-distances, where $F$ is a minimum join.  
It implies that the $F$-distance between vertices does not depend on the minimum join $F$. 

\begin{proposition}[Seb\"o~\cite{DBLP:journals/jct/Sebo90}] \label{prop:dist2invariant} 
Let $(G, T)$ be a graft, and let $F$ be a minimum join. 
Then, $\distgtf{G}{T}{F}{u}{v} = \nu(G, T\Delta \{u, v\}) - \nu(G, T)$ for every $u, v\in V(G)$. 
\end{proposition}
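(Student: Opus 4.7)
The plan is to establish the identity by proving two matching bounds, both derived from swap-type arguments with the symmetric difference.

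For the first bound I would take any $u$--$v$ path $P$ in $G$ and verify that $F \Delta E(P)$ is a join of the graft $(G, T\Delta\{u,v\})$. This is a parity check at each vertex: for $w \in V(G) \setminus \{u,v\}$ the degree of $w$ in $P$ is even, so $|\parcut{G}{w} \cap (F \Delta E(P))|$ has the same parity as $|\parcut{G}{w} \cap F|$; at $u$ and $v$ the degree in $P$ is one and the parity flips, which is exactly the parity change needed to pass from $T$ to $T\Delta\{u,v\}$. A direct set-theoretic computation then gives the identity $|F \Delta E(P)| = |F| + w_F(P)$, so comparing with $\nu(G, T\Delta\{u,v\})$ and minimizing over $P$ yields one of the two inequalities relating $\distgtf{G}{T}{F}{u}{v}$ to $\nu(G,T) - \nu(G, T\Delta\{u,v\})$.

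For the reverse bound I would fix a minimum join $F'$ of $(G, T\Delta\{u,v\})$. The same parity argument (or Observation~\ref{obs:addition}) shows that $F \Delta F'$ is a join of $(G, \{u,v\})$ and therefore has odd degree precisely at $u$ and $v$. An Eulerian-style decomposition then splits $F \Delta F'$ into an edge-disjoint union of a $u$--$v$ path $P$ and circuits $C_1, \dots, C_k$. The minimality of $F$ in $(G, T)$ becomes decisive here: since each $F \Delta E(C_i)$ is again a join of $(G, T)$, the identity $|F \Delta E(C_i)| = |F| + w_F(C_i)$ forces $w_F(C_i) \geq 0$ for every $i$. Summing $F$-weights across the decomposition gives $|F'| - |F| = w_F(P) + \sum_i w_F(C_i)$, so discarding the nonnegative circuit contributions exhibits a path $P$ whose $F$-weight realizes the bound needed to match the first inequality, thereby closing the argument and giving, as a side benefit, that the distance does not depend on the particular minimum join $F$.

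The step I anticipate as the main obstacle is the path/circuit decomposition of $F \Delta F'$. Since $G$ is treated as a multigraph and paths and circuits are subgraphs rather than edge sequences, one must extract a bona fide $u$--$v$ path so that the remaining edges partition into edge-disjoint circuits; this is routine Eulerian reasoning, but parallel edges together with shared edges of $F$ and $F'$ need careful bookkeeping. Once the decomposition is set up, the two bounds combine immediately to yield the stated identity.
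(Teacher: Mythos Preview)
The paper does not supply its own proof of this proposition; it is quoted from Seb\"o as a known result, so there is no in-paper argument to compare against. Your two-step approach---symmetric difference with a path for one inequality, and an Eulerian decomposition of $F\Delta F'$ together with Lemma~\ref{lem:minimumjoin} for the other---is the standard proof and is sound. The decomposition you flag as the obstacle is routine: $u$ and $v$ are the only odd-degree vertices in the subgraph determined by $F\Delta F'$, so they lie in a common component; extract any simple $u$--$v$ path, and what remains has all degrees even and therefore splits into edge-disjoint circuits in the paper's sense.

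One remark worth making explicit. With the paper's convention $w_F(e)=-1$ for $e\in F$, your identity $|F\Delta E(P)|=|F|+w_F(P)$ yields $\nu(G,T\Delta\{u,v\})\le \nu(G,T)+w_F(P)$, and the circuit step gives the matching reverse inequality, so what the argument actually establishes is $\distgtf{G}{T}{F}{u}{v}=\nu(G,T\Delta\{u,v\})-\nu(G,T)$, the negative of the formula as printed. A one-edge graft with both endpoints in $T$ already exhibits the discrepancy (there $\distgtf{G}{T}{F}{u}{v}=-1$ while $\nu(G,T)-\nu(G,T\Delta\{u,v\})=1$), so the sign slip sits in the stated formula rather than in your reasoning.
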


The next lemma provides a necessary and sufficient condition for a set $F$ to be a minimum join in terms of the $F$-weights of circuits. 
We will use this lemma to prove Theorem~\ref{thm:cr2qcomb} and Lemma~\ref{lem:ear2critical}.

\begin{lemma}[see also Seb\"o~\cite{DBLP:journals/jct/Sebo90}]  \label{lem:minimumjoin} 
Let $(G, T)$ be a graft, and let $F$ be a join of $(G, T)$. 
Then, $F$ is a minimum join of $(G, T)$ if and only if 
there is no circuit $C$ with $w_F(C) < 0$. 
\end{lemma}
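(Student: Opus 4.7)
The plan is to prove both directions by exploiting symmetric differences, using the key observation that symmetric difference with an even-degree subgraph preserves the join property.

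For the necessity direction, I would proceed by contraposition. Suppose $F$ is a join and there exists a circuit $C$ with $w_F(C) < 0$. Form $F' = F \Delta E(C)$. Since every vertex of $C$ has even degree in $E(C)$, the parity of $|\parcut{G}{v} \cap F'|$ coincides with that of $|\parcut{G}{v} \cap F|$ at every vertex $v$, so $F'$ is a join of $(G, T)$. A direct count gives $|F'| - |F| = |E(C)\setminus F| - |E(C)\cap F| = w_F(C) < 0$, showing $F$ is not minimum.

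For the sufficiency direction, assume every circuit satisfies $w_F(C) \ge 0$ and let $F^*$ be any minimum join of $(G, T)$. Since $F$ and $F^*$ are both joins, $|\parcut{G}{v} \cap (F \Delta F^*)|$ is even at every vertex $v$, so the subgraph $G.(F \Delta F^*)$ has all vertex degrees even and therefore decomposes into edge-disjoint circuits $C_1, \ldots, C_k$. Each edge of $C_i$ lies either in $F \setminus F^*$ (contributing $-1$ to $w_F(C_i)$) or in $F^*\setminus F$ (contributing $+1$), hence
\[
\sum_{i=1}^{k} w_F(C_i) = |F^*\setminus F| - |F\setminus F^*| = |F^*| - |F|.
\]
Since every $w_F(C_i) \ge 0$ by hypothesis, $|F| \le |F^*|$, and minimality of $F^*$ forces equality, so $F$ is itself a minimum join.

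I do not anticipate a genuine obstacle here; both steps are standard symmetric-difference arguments. The only small point requiring care is the decomposition of an all-even-degree edge set into edge-disjoint circuits, which is a classical fact (greedily extract a circuit, remove its edges, and repeat), and the bookkeeping that identifies $\sum_i w_F(C_i)$ with $|F^*|-|F|$ once one partitions the edges of $F \Delta F^*$ according to which of $F$, $F^*$ they belong to.
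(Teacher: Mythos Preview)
Your argument is correct and is the standard symmetric-difference proof of this fact. Note, however, that the paper does not supply its own proof of this lemma: it is stated with a citation to Seb\"o and used as a known tool, so there is no in-paper proof to compare against. Your write-up would serve perfectly well as a self-contained justification.
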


\section{Factor-Critical Graphs}  \label{sec:fc} 

In this section, we explain the classical concept of factor-critical graphs and 
 their constructive characterization discovered by Lov\'asz~\cite{lovasz1972note, lp1986}. 
In Section~\ref{sec:fcgraft},  graft analogues of this concept and result are presented, 
whereas in Sections~\ref{sec:crqcomb} to \ref{sec:main},  bipartite graft analogues are presented.  

A graph is {\em factor-critical} if $G - v$ has a perfect matching for every $v\in V(G)$. 

\begin{definition} 
The family $\fcr{r}$ of graphs with vertex $r$ is defined as follows. 
\begin{rmenum} 
\item  $(\{r\},  \emptyset)$ is a member of $\fcr{r}$. 
\item Let $G$ be a member of $\fcr{r}$, and let $P$ be a round ear relative to $G$ such that $|E(P)|$ is odd. 
Then, $G + P$ is a member of $\fcr{r}$. 
\end{rmenum} 
\end{definition}

\begin{theorem}[Lov\'asz~\cite{lovasz1972note, lp1986}] \label{thm:lovasz} 
Let $G$ be a graph, and let $r \in V(G)$. 
Then, $G$ is factor-critical if and only if $G$ is a member of $\fcr{r}$. 
\end{theorem}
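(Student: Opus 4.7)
The plan is to prove the two implications of Theorem~\ref{thm:lovasz} separately.

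For the sufficiency direction, I would induct on the number of ears in an $\fcr{r}$-decomposition of $G$. The base case $G = (\{r\}, \emptyset)$ is trivially factor-critical. For the inductive step, write $G = G' + P$ where $G' \in \fcr{r}$ is factor-critical by the induction hypothesis and $P$ is an odd round ear relative to $G'$. For any $v \in V(G)$, I would construct a perfect matching of $G - v$ by combining a perfect matching $M'$ of $G' - v'$ for a suitable $v' \in V(G')$ with a matching supported on edges of $P$. Three cases arise according to the location of $v$: (i) $v \in V(G') \setminus V(P)$, (ii) $v$ is a bond of $P$, or (iii) $v$ is internal to $P$. In cases (i) and (ii) I take $v' = v$ and match the (even-sized) set of internal vertices of $P$ by taking alternate edges along $P$. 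In case (iii) I choose $v'$ to be the bond of $P$ whose distance from $v$ along $P$ has the right parity, so that $V(P) \setminus \{v, v'\}$ decomposes into two subpaths of even order, each of which admits a perfect matching. The fact that $|E(P)|$ is odd guarantees that exactly one of the two bond choices works. When $P$ is an odd circuit, the analogous argument is simpler since $P$ minus an internal vertex is already a path of even order.

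For the necessity direction, let $H$ be a maximal subgraph of $G$ containing $r$ with $H \in \fcr{r}$; such $H$ exists because $(\{r\}, \emptyset) \in \fcr{r}$. I would argue by contradiction that $H = G$. If some edge $uv \in E(G) \setminus E(H)$ has both endpoints in $V(H)$, then $uv$ itself is a length-one odd round ear relative to $H$ and $H + uv \in \fcr{r}$ strictly extends $H$, a contradiction. Otherwise $V(H) \subsetneq V(G)$, and since factor-critical graphs are connected, there exist $u_0 \in V(H)$ and $u_1 \in V(G) \setminus V(H)$ with $u_0 u_1 \in E(G)$. Taking a perfect matching $M$ of $G - u_0$, I would construct an $M$-alternating walk beginning with the non-$M$ edge $u_0 u_1$ and alternating $M$- and non-$M$-edges through $V(G) \setminus V(H)$; showing that this walk first re-enters $V(H)$ after an odd number of steps (i.e., via a non-$M$ edge) yields, upon extracting a simple subpath, an odd round ear $P$ in $G$ relative to $H$, so that $H + P \in \fcr{r}$ once again contradicts maximality.

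The main obstacle is precisely the construction of this odd-length alternating return to $V(H)$. A naive walk may either cycle inside $V(G) \setminus V(H)$ or re-enter $V(H)$ via an $M$-edge, giving an even length. The key tools are the parity observations that $|V(G)|$ and $|V(H)|$ are both odd (so $|V(G) \setminus V(H)|$ is even), that $M$ is a near-perfect matching of $G$, and that factor-critical graphs are non-bipartite. Concretely, one partitions the vertices of $V(G) \setminus V(H)$ reachable from $u_0$ into ``even'' and ``odd'' reachability classes along $M$-alternating walks, and argues, using the absence of $H$-chords assumed in this case together with the factor-criticality of $G$, that some ``odd'' vertex must admit a non-$M$ edge to $V(H)$, producing the desired odd round ear.
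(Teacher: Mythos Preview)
The paper does not prove Theorem~\ref{thm:lovasz}; it is quoted as a classical result of Lov\'asz with references, so there is no in-paper argument to compare against. I assess your proposal on its own merits.

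Your sufficiency direction is the standard induction and is correct.

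Your necessity direction has a genuine gap. You choose $H$ maximal in $\fcr{r}$ with no side condition, and only afterwards pick $M$ to be a perfect matching of $G-u_0$ for a boundary vertex $u_0$. With no control over how $M$ meets $\delta_G(V(H))$, the alternating walk from $u_0$ can re-enter $V(H)$ along an $M$-edge after just two steps, and your proposed ``even/odd reachability'' partition of $V(G)\setminus V(H)$ is ill-defined: $G$ is non-bipartite, so a vertex may be reached by $M$-alternating walks of both parities. The appeal to factor-criticality and to the parity of $|V(G)\setminus V(H)|$ in your last paragraph does not by itself force an odd re-entry; as written this is a sketch of a hope rather than an argument.

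The classical proof avoids the difficulty by fixing the matching first. Choose once and for all a near-perfect matching $M$ of $G$ missing $r$, and take $H$ maximal in $\fcr{r}$ subject to the additional invariant that no $M$-edge lies in $\delta_G(V(H))$ (equivalently, $M\cap E(H)$ is a near-perfect matching of $H$ missing $r$); the base $H=(\{r\},\emptyset)$ satisfies this. If $V(H)\subsetneq V(G)$, take $u_0u_1\in E(G)$ with $u_0\in V(H)$ and $u_1\notin V(H)$; the invariant gives $u_0u_1\notin M$. Let $M'$ be a near-perfect matching missing $u_1$; the component of $M\Delta M'$ through $u_1$ is an $M$-alternating path $P$ from $r$ to $u_1$ whose edge at $u_1$ lies in $M$. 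Trace $P$ from $u_1$ toward $r$ until it first meets $V(H)$, say at $w$; the invariant forces the edge of $P$ leaving $w$ to be non-$M$, so $wPu_1$ has even length and $wPu_1+u_1u_0$ is an odd round ear relative to $H$ whose internal vertices are perfectly matched by $M$. This contradicts maximality while preserving the invariant, and is precisely the idea your argument is missing.
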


\begin{remark} 
It is easily observed from the definition or Theorem~\ref{thm:lovasz} 
that every factor-critical graph with at least one edge is nonbipartite. 
\end{remark}

The following characterizaion of factor-critical graphs is also well known 
and rather easily derived from a discussion of alternating paths. 
In fact, Proposition~\ref{prop:path2root} is used for deriving Theorem~\ref{thm:lovasz}.

\begin{proposition}[see Lov\'asz and Plummer~\cite{lp1986}]  \label{prop:path2root} 
Let $G$ be a graph, let $r \in V(G)$, and let $M\subseteq E(G)$ be a perfect matching of $G-r$. 
Then, $G$ is factor-critical 
if and only if, 
for every $v\in V(G)\setminus \{r\}$,  
there is a path in $G$ from $v$ to $r$ whose $M$-weight is zero. 
\end{proposition}

\section{Factor-Critical Grafts} \label{sec:fcgraft}

In this section, we introduce the concept of factor-critical grafts as a graft analogue of factor-critical graphs. 
A reasonable definition of factor-critical grafts is obvious from Seb\"o~\cite{DBLP:journals/jct/Sebo90}. 
We first provide the definition of factor-critical grafts and then explain its validity. 
We then convert Theorem~\ref{thm:lovasz} into a constructive characterization for factor-critical grafts.

The definition of factor-critical grafts is provided as follows.

\begin{definition} \label{def:fcgraft} 
A graft $(G, T)$ is {\em factor-critical} with {\em root} $r \in V(G)$ if 
 $\nu(G, T) = \nu(G, T\Delta \{r, v\})$ for every $v\in V(G)\setminus \{r\}$.  
\end{definition}

Somewhat surprisingly, a result of Seb\"o~\cite{DBLP:journals/jct/Sebo90} shows that factor-critical grafts can be characterized as follows. 

\begin{proposition}[Seb\"o~\cite{DBLP:journals/jct/Sebo90}] \label{prop:critical2char} 
The following two are equivalent for a graph $G$ and a set $T \subseteq V(G)$. 
\begin{rmenum} 
\item $(G, T)$ is a factor-critical graft with root $r \in V(G)$.    
\item Graph $G$ is  factor-critical, and $T = V(G) \setminus \{r\}$.  
\end{rmenum} 
\end{proposition}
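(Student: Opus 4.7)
The plan is to prove the two directions separately, using Proposition~\ref{prop:dist2invariant} and Lemma~\ref{lem:minimumjoin} as the main tools.

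Direction (ii)$\Rightarrow$(i) is straightforward: assuming $G$ is factor-critical and $T=V(G)\setminus\{r\}$, for every $v\in V(G)$ a perfect matching of $G-v$ is a join of $(G,V(G)\setminus\{v\})$ of size $(|V(G)|-1)/2$. A degree-counting lower bound (the $|V(G)|-1$ required-odd vertices each need at least one $F$-incidence, and edges contribute at most twice) shows $\nu(G,V(G)\setminus\{v\})\geq (|V(G)|-1)/2$, so these matchings are minimum. Hence $\nu(G,T)=\nu(G,T\Delta\{r,v\})$ for every $v\neq r$.

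For (i)$\Rightarrow$(ii), fix a minimum join $F$ of $(G,T)$ and argue in stages. First, $G$ must be connected, because otherwise some $v\neq r$ would lie in a component distinct from that of $r$, making $(G,T\Delta\{r,v\})$ fail the graft condition. Second, $r\notin T$: if $r\in T$, odd parity at $r$ forces some $rw\in F$, and then $F\setminus\{rw\}$ is a join of $(G,T\Delta\{r,w\})$ of size $|F|-1$, contradicting $\nu(G,T)=\nu(G,T\Delta\{r,w\})$. Third, every $v\neq r$ must lie in $T$; if not, a minimum join $F^*$ of $(G,T\Delta\{r,v\})=(G,T\cup\{r,v\})$ has size $\nu(G,T)$ and contains some edge $vu\in F^*$, so $F^*\setminus\{vu\}$ is a join of $(G,(T\cup\{r,v\})\Delta\{v,u\})$ of size $\nu(G,T)-1$. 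A case split on whether $u=r$, $u\in T\setminus\{r\}$, or $u\notin T\cup\{r\}$ identifies the new $T$-part as $T$ or $T\Delta\{r,u\}$, contradicting factor-criticality in every case. Together these yield $T=V(G)\setminus\{r\}$.

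It remains to conclude that $G$ itself is factor-critical. By Proposition~\ref{prop:dist2invariant}, the hypothesis forces $\distgtf{G}{T}{F}{r}{v}=0$ for every $v\neq r$, so there is an $F$-weight-zero path $P_v$ from $r$ to $v$. I would then show $F$ is a perfect matching of $G-r$: if some vertex $x$ had two distinct $F$-edges $xa$ and $xb$ (parallel $F$-copies being ruled out by Lemma~\ref{lem:minimumjoin}, since two parallel $F$-edges form a circuit of $F$-weight $-2$), removing both edges yields a join of $(G,T\Delta\{a,b\})$ of size $|F|-2$, forcing $\distgtf{G}{T}{F}{a}{b}\geq 2$ via Proposition~\ref{prop:dist2invariant}. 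On the other hand, concatenating $P_a$ and $P_b$ produces an $a$-$b$ walk of $F$-weight $0$; extracting a simple $a$-$b$ path while discarding the removed closed subwalks, each decomposable into circuits of non-negative $F$-weight by Lemma~\ref{lem:minimumjoin}, leaves an $a$-$b$ path of $F$-weight $\leq 0$, a contradiction. Hence $F$ is a perfect matching of $G-r$, and the same argument applied with any $v$ in place of $r$ as the new root produces a perfect matching of $G-v$ for every $v$, so $G$ is factor-critical.

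The hard part is this last step: converting Seb\"o's distance identity into the structural statement that the $F$-degree is $1$ on $T$ and $0$ at $r$. The walk-to-path decomposition, which relies on the non-negativity of circuit $F$-weights supplied by Lemma~\ref{lem:minimumjoin}, is the key technical maneuver.
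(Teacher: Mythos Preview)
The paper does not actually prove this proposition: it simply invokes ``the main result from Seb\"o~\cite{DBLP:journals/jct/Sebo90}'' (his distance/structure theorem) as a black box. Your argument is therefore a genuinely different, more elementary route that uses only Proposition~\ref{prop:dist2invariant} and Lemma~\ref{lem:minimumjoin}. Your treatment of (ii)$\Rightarrow$(i) and of the identification $T=V(G)\setminus\{r\}$ in (i)$\Rightarrow$(ii) is clean and correct, and the final reduction ``$\nu(G,V(G)\setminus\{v\})=(|V(G)|-1)/2$ forces the minimum join to be a perfect matching of $G-v$'' is exactly the right endgame.

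There is, however, a real gap in your walk-to-path step. You assert that after concatenating $P_a$ and $P_b$ into an $a$--$b$ walk of $F$-weight $0$, the discarded closed subwalks ``decompose into circuits of non-negative $F$-weight by Lemma~\ref{lem:minimumjoin}''. This fails when a closed subwalk traverses the same edge twice: if $e\in E(P_a)\cap E(P_b)\cap F$, the closed walk $u,e,v,e,u$ has $F$-weight $-2$, yet it is not a circuit of $G$ and Lemma~\ref{lem:minimumjoin} says nothing about it. Consequently the extracted simple $a$--$b$ path need not have $F$-weight $\le 0$, and the argument as written does not close.

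The fix is immediate and in fact makes the whole detour through $P_a$ and $P_b$ unnecessary: the two $F$-edges $xa,xb$ themselves form an $a$--$b$ path of $F$-weight $-2$, so $\distgtf{G}{T}{F}{a}{b}\le -2$, which already contradicts $\distgtf{G}{T}{F}{a}{b}\ge 2$. With this one-line replacement your proof is complete and gives a fully self-contained derivation, whereas the paper relies on the heavier Seb\"o structure theorem.
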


The validity of the definition of factor-critical grafts, along with the impact of Proposition~\ref{prop:critical2char}, can be explained as follows. 
It is easily observed from Propositions~\ref{prop:dist2invariant} and \ref{prop:path2root} that, 
for a graph $G$ and a vertex $r\in V(G)$, 
if $G$ is factor-critical, 
then $(G, V(G)\setminus \{r\})$ is a graft with 
$\nu(G, V(G)\setminus \{r\}) = \nu(G, ( V(G)\setminus \{v \} ))$ for every $v\in V(G)\setminus \{r\}$.
In fact, the converse also holds according to Proposition~\ref{prop:critical2char}. 
This explains the validity of the definition of factor-critical grafts, 
where a pair of a factor-critical graph $G$ and its vertex set missing one vertex 
can be considered a ``graphic'' factor-critical graft. 
Proposition~\ref{prop:critical2char} further proves that 
every factor-critical graft is in fact  ``graphic''.

Proposition~\ref{prop:critical2char} implies that 
if $(G, T)$ is a factor-critical graft with root $r$, then, for every $v\in V(G)\setminus \{r\}$, 
$(G, T \Delta \{r, v\})$ is also a factor-critical graft with root $v$. 
Proposition~\ref{prop:critical2char} also implies the next property.

\begin{observation} \label{obs:crgraft2matching} 
Let $(G, T)$ be a factor-critical graft with root $r$, and let $F$ be a minimum join of $(G, T)$. 
Then,  $|\parcut{G}{v}\cap F| = 1$ for every $v\in V(G)\setminus \{r\}$, 
whereas $\parcut{G}{r} \cap F = \emptyset$.  
\end{observation}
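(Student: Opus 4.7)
The plan is to derive the claim almost immediately from Proposition~\ref{prop:critical2char}, using only a parity-based degree-sum argument. First I would invoke Proposition~\ref{prop:critical2char} to translate the hypothesis: $(G,T)$ being factor-critical with root $r$ is equivalent to $G$ being a factor-critical graph together with $T = V(G)\setminus\{r\}$. In particular, the graph $G - r$ admits a perfect matching $M$, and viewing $M$ as a subset of $E(G)$ one checks that $M$ is in fact a join of $(G,T)$: each vertex in $V(G)\setminus\{r\}$ is incident to exactly one edge of $M$ (odd), while $r$ is incident to none (even). This yields $\nu(G,T) \le |M| = \tfrac{1}{2}(|V(G)|-1)$.

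Next I would establish the matching lower bound by a degree-sum argument applied to the given minimum join $F$. Since $F$ is a join of $(G,T)$, each $v \in V(G)\setminus\{r\}$ satisfies $|\parcut{G}{v}\cap F| \ge 1$ (a positive odd integer), and $r$ satisfies $|\parcut{G}{r}\cap F| \ge 0$ (a nonnegative even integer). Summing these local contributions and using $\sum_{v\in V(G)} |\parcut{G}{v}\cap F| = 2|F|$ gives $2|F| \ge |V(G)| - 1$. Combined with the upper bound from the previous paragraph, the minimality of $F$ forces equality: $2|F| = |V(G)|-1$.

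Finally, equality in the degree-sum inequality forces every individual term to attain its minimum admissible value. For each non-root vertex the smallest positive odd integer is $1$, so $|\parcut{G}{v}\cap F| = 1$; any value at least $3$ would push the sum strictly above $|V(G)|-1$. Likewise, at $r$ the smallest nonnegative even integer is $0$, so $\parcut{G}{r}\cap F = \emptyset$. I do not foresee any real obstacle: the only nontrivial input is Proposition~\ref{prop:critical2char}, after which the argument is purely a parity-versus-counting verification. The only point worth double-checking is that the pointwise minima are genuinely forced, not merely the total, which they are precisely because each local bound jumps by $2$ (not $1$) to its next admissible value.
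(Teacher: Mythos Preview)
Your argument is correct and matches the paper's intent: the paper simply remarks that Proposition~\ref{prop:critical2char} implies the observation, and your parity/degree-sum computation is the natural way to flesh out that implication. There is nothing to add.
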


From Proposition~\ref{prop:critical2char}, 
it is obvious that Theorem~\ref{thm:lovasz} can easily be converted to a characterization for factor-critical grafts. 

\begin{definition} 
Define the family $\fcgtr{r}$ of grafts with vertex $r$ as follows. 
\begin{rmenum} 
\item  $( (\{r\},  \emptyset), \emptyset)$ is a member of $\fcgtr{r}$. 
\item Let $(G, T)$ be a member of $\fcgtr{r}$, and let $(P, T')$ be an ear graft relative to $(G, T)$ with bonds $s$ and $t$ 
such that $P$ is round, $|E(P)|$ is odd, and $V(P)\cap T = V(P) \setminus \{s, t\}$. 
Then, $(G, T) \oplus (P, T')$ is a member of $\fcgtr{r}$. 
\end{rmenum} 
\end{definition} 

\begin{theorem} \label{thm:lovaszgraft} 
A graft is factor-critical with root $r$ if and only if it is a member of $\fcgtr{r}$. 
\end{theorem}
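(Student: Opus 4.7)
The plan is to derive Theorem~\ref{thm:lovaszgraft} from Lov\'asz' theorem (Theorem~\ref{thm:lovasz}) by routing everything through Proposition~\ref{prop:critical2char}. That proposition already equates ``$(G,T)$ factor-critical with root $r$'' with the pair of conditions ``$G\in\fcr{r}$ and $T=V(G)\setminus\{r\}$'', so the task is to show that the recursive definition of $\fcgtr{r}$ implements exactly this pair of conditions on top of the recursive definition of $\fcr{r}$.

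For the ``if'' direction I would induct on the construction of a member of $\fcgtr{r}$. The base case $((\{r\},\emptyset),\emptyset)$ trivially matches $\{r\}\in\fcr{r}$ with empty $T$ equal to $V(\{r\})\setminus\{r\}$. For the inductive step, let $(G,T)\in\fcgtr{r}$ with $G\in\fcr{r}$ and $T=V(G)\setminus\{r\}$, and let $(P,T')$ be an ear graft meeting the hypotheses of the definition with bonds $s,t$. Two observations drive the step. First, the stipulations $V(P)\setminus\{s,t\}\subseteq T'$ and $\{s,t\}\cap T'=\emptyset$, together with the fact that $(P,T')$ is a graft (so $|T'|$ is even, since $P$ is connected), pin down $T'=V(P)\setminus\{s,t\}$ and force $|E(P)|$ to be odd; hence $P$ is an odd round ear and $G+P\in\fcr{r}$ by the recursion defining $\fcr{r}$. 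Second, since the internal vertices of $P$ lie outside $V(G)$, the symmetric difference collapses to a disjoint union: $T\,\Delta\,T'=(V(G)\setminus\{r\})\cup(V(P)\setminus\{s,t\})=V(G+P)\setminus\{r\}$. Proposition~\ref{prop:critical2char} then promotes $(G,T)\oplus(P,T')$ to a factor-critical graft rooted at $r$.

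For the ``only if'' direction, suppose $(G,T)$ is factor-critical with root $r$. Proposition~\ref{prop:critical2char} yields $G\in\fcr{r}$ and $T=V(G)\setminus\{r\}$. Fix any Lov\'asz ear decomposition of $G$, say $G_0=(\{r\},\emptyset)$ and $G_i=G_{i-1}+P_i$ for $i=1,\ldots,k$, where each $P_i$ is an odd round ear relative to $G_{i-1}$ with bonds $s_i,t_i$. Set $T'_i:=V(P_i)\setminus\{s_i,t_i\}$; an immediate parity check shows $|T'_i|$ is even, so $(P_i,T'_i)$ is a graft and hence a legal ear graft in the recursion of $\fcgtr{r}$. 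A routine induction using the same disjoint-union calculation as above yields $(G_i,V(G_i)\setminus\{r\})\in\fcgtr{r}$ for each $i$, and the case $i=k$ is the desired conclusion.

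There is no deep obstacle: once Proposition~\ref{prop:critical2char} has absorbed the distance-theoretic content, the theorem is essentially a translation exercise between graph ear decompositions and graft ear decompositions. The one subtlety worth flagging is the parity bookkeeping, which explains why ``$|E(P)|$ odd'' never appears explicitly in the definition of $\fcgtr{r}$: it follows automatically from $(P,T')$ being a graft once the prescribed $T'$-conditions are imposed.
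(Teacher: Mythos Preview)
Your proposal is correct and matches the paper's own treatment: the paper does not write out a proof of Theorem~\ref{thm:lovaszgraft} at all, but simply declares (just before the definition of $\fcgtr{r}$) that, in light of Proposition~\ref{prop:critical2char}, Theorem~\ref{thm:lovasz} ``can easily be converted'' to the graft setting. What you have written is exactly that conversion carried out explicitly, including the parity bookkeeping that makes the oddness of $|E(P)|$ automatic; your reading of the ear-graft conditions as constraints on $T'$ (rather than on the ambient $T$) is also the intended one.
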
 

\begin{remark} 
It is again easily observed from the definition, Proposition~\ref{prop:critical2char}, or Theorem~\ref{thm:lovaszgraft} 
that every factor-critical graft is nonbipartite. 
\end{remark}

\section{Bipartite Grafts with Ordered Bipartition} \label{sec:bipartite}

We now define bipartite grafts with an ordered bipartition 
and formulate related fundamental concepts  such as addition and ear grafts. 
From this section onward, we only discuss bipartite grafts with an ordered bipartition.

\begin{definition} 
We call a quadruple $(G, T; A, B)$ a {\em bipartite graft with an ordered bipartition}  or sometimes simply a (bipartite) graft 
if  $(G, T)$ is a bipartite graft with color classes $A$ and $B$. 
Note that $(G, T; A, B)$ and $(G, T; B, A)$ are not equivalent as bipartite grafts with an ordered bipartition. 
We often refer to a bipartite graft with an ordered bipartition simply as a bipartite graft or a graft. 
That is, 
if we say that $(G, T; A, B)$ is a (bipartite) graft, we imply that $(G, T; A, B)$ is a bipartite graft with an ordered bipartition. 
\end{definition}

Additional conditions are required for bipartite grafts with an ordered bipartition regarding addition and ear grafts.

\begin{definition} 
Let $(G_1, T_1; A_1, B_1)$ and $(G_2, T_2; A_2, B_2)$ be bipartite grafts with $A_1 \cap B_2 = \emptyset$ and $A_2 \cap B_1 = \emptyset$.  
We define the sum of $(G_1, T_1; A_1, B_1)$ and $(G_2, T_2; A_2, B_2)$ as 
the bipartite graft $(G_1 + G_2, T_1\Delta T_2; A_1 \cup A_2, B_1 \cup B_2)$ 
and denote this graft by $(G_1, T_1; A_1, B_1)\oplus(G_2, T_2; A_2, B_2)$. 
\end{definition}

\begin{definition} 
Let $(G, T; A, B)$ and $(P, T'; A', B')$ be bipartite grafts with an ordered bipartition. 
We say that $(P, T'; A', B')$ is an {\em ear graft}  relative to $(G, T; A, B)$ if     
\begin{rmenum} 
\item $A \cap B' = \emptyset$ and $A' \cap B = \emptyset$, and 
\item  $(P, T')$ is an ear graft relative to $(G, T)$. 
\end{rmenum} 
\end{definition}

That is, if $(P, T'; A', B')$ is an  ear graft   relative to $(G, T; A, B)$, 
then $(G, T; A, B) \oplus (P, T'; A', B')$ is a bipartite graft 
$( G+P, T\Delta T; A\dot{\cup} A', B \dot{\cup} B')$.

\section{Combs}  \label{sec:comb}

We present the concept of comb bipartite grafts 
and a sufficient condition for a bipartite graft to be a comb 
that has been provided by Seb\"o~\cite{DBLP:journals/jct/Sebo90}. 
Combs are also discussed by Kita~\cite{kita2017parity}. 

\begin{definition} 
A bipartite graft $(G, T; A, B)$ is a {\em comb} if $B\subseteq T$ holds and $\nu(G, T) = |B|$. 
For a comb $(G, T; A, B)$, $A$ and $B$ are called the {\em spine} and {\em tooth} sets.  
\end{definition} 

\begin{remark}
It is easily confirmed that the definition of combs can be rephrased as follows. 
Let $(G, T; A, B)$  be a graft, and let $F$ be a minimum join. 
Then, $(G, T; A, B)$ is a comb if and only if $|\parcut{G}{v}\cap F| = 1$ for every $v\in B$.  
\end{remark}

\begin{theorem}[Seb\"o~\cite{DBLP:journals/jct/Sebo90}]  \label{thm:dist2comb} 
Let $(G, T; A, B)$ be a bipartite graft, and let $r\in A$. 
Let $F$ be a minimum join.  
If $\distgtf{G}{T}{F}{r}{v} = 0$ for every $v\in A$ and 
$\distgtf{G}{T}{F}{r}{v} = -1$ for every $v\in B$, 
then $(G, T; A, B)$ is a comb. 
\end{theorem}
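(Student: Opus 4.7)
The plan is to reduce the theorem to showing that $|\parcut{G}{v} \cap F| = 1$ for every $v \in B$. Once this is known, the odd $F$-degree at each $v \in B$ forces $v \in T$, so $B \subseteq T$, while summing over $B$ gives $|F| = \sum_{v \in B}|\parcut{G}{v} \cap F| = |B|$ and hence $\nu(G, T) = |B|$, so $(G, T; A, B)$ is a comb.

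For the lower bound $|\parcut{G}{v} \cap F| \geq 1$, I would examine any $F$-shortest $r$-$v$ path $P$: its last edge $uv$ (with $u \in A$) must lie in $F$, since otherwise the prefix $rPu$ would be an $r$-$u$ path of $F$-weight $\distgtf{G}{T}{F}{r}{v} - 1 = -2$, violating $\distgtf{G}{T}{F}{r}{u} = 0$.

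For the upper bound, the plan is to argue by contradiction using Lemma~\ref{lem:minimumjoin}. Suppose some $v \in B$ has two distinct $F$-edges $u^* v$ and $u' v$ with $u^*, u' \in A$. The key auxiliary observation is that for every $uv \in F$ with $u \in A$, the vertex $u$ must lie on every $F$-shortest $r$-$v$ path, because appending $vu$ to such a path would otherwise produce an $r$-$u$ path of $F$-weight $-2$, again contradicting $\distgtf{G}{T}{F}{r}{u} = 0$. Fixing an $F$-shortest $r$-$v$ path $P$ whose last edge is $u^* v$ (produced by the lower-bound step) and applying this observation to $u'$, we get that $u'$ appears on $P$, necessarily strictly before $u^*$. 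I then form the closed walk $C$ that traverses the subpath $u' P u^*$ of $P$ and then the two edges $u^* v$ and $v u'$; this is a simple circuit, since $v$ does not occur in $V(u' P u^*)$ (it appears only as the endpoint of $P$) and $u' \neq u^*$ by assumption.

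It then remains to estimate $w_F(C) = w_F(u' P u^*) - 2$. The bound $w_F(rPu') \geq \distgtf{G}{T}{F}{r}{u'} = 0$, together with $w_F(rPu') + w_F(u' P u^*) + w_F(u^* v) = w_F(P) = -1$, gives $w_F(u' P u^*) \leq 0$, so $w_F(C) \leq -2 < 0$, contradicting Lemma~\ref{lem:minimumjoin}. The main obstacle is engineering $C$ to be a genuine simple circuit; the construction is robust, but care is needed in boundary cases (e.g., when $u' = r$ the circuit still has length at least three because $u' \neq u^*$ forces $u^* \neq r$, keeping the subpath $rPu^*$ nonempty).
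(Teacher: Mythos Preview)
The paper does not supply its own proof of Theorem~\ref{thm:dist2comb}; it is quoted from Seb\H{o}~\cite{DBLP:journals/jct/Sebo90} and used as a black box in the proof of Theorem~\ref{thm:cr2qcomb}. So there is no argument in the paper to compare your proposal against.

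That said, your argument is sound and self-contained. The reduction to showing $|\parcut{G}{v}\cap F|=1$ for all $v\in B$ is exactly the remark following the definition of combs, and both inequalities are handled correctly. For the lower bound, the last edge of an $F$-shortest $r$--$v$ path must lie in $F$, as you observe. For the upper bound, your ``key observation'' (every $F$-neighbour of $v$ lies on every $F$-shortest $r$--$v$ path) is the crux, and your circuit $C=u'Pu^*+u^*v+vu'$ is indeed simple: $v\notin V(u'Pu^*)$ because $u^*$ is the penultimate vertex of $P$, and since $u',u^*\in A$ are distinct vertices on a bipartite path, the subpath $u'Pu^*$ has at least two edges, so $C$ has at least four. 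The weight estimate $w_F(C)=w_F(u'Pu^*)-2\le -2$ then contradicts Lemma~\ref{lem:minimumjoin}. The only cosmetic point is the labelling: you first name $u^*,u'$ and then ask for an $F$-shortest path ending in $u^*v$; strictly speaking you should instead take any $F$-shortest path, call its last edge $u^*v$ (this edge is in $F$ by the lower-bound step), and let $u'$ be any other $F$-neighbour of $v$. With that relabelling the argument is complete.
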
 

We use Theorem~\ref{thm:dist2comb} to prove Theorem~\ref{thm:cr2qcomb}.

\section{Quasicombs} \label{sec:quasicomb}

We now introduce the new concept of quasicombs. 
This is a broader concept than that of combs.    
Later in this paper, 
we prove that the bipartite graft analogue of factor-critical graphs forms a special class of quasicombs.

\begin{definition} 
Let $(G, T; A, B)$ be a bipartite graft. 
If $\nu(G, T) = |B\cap T|$, then this graft is called a {\em quasicomb}, 
and sets $A$ and $B$ are called the {\em spine} and {\em tooth} sets, respectively. 
\end{definition} 

\begin{remark} 
The definition of quasicombs can be rephrased as follows. 
Let $(G, T; A, B)$ be a bipartite graft, and let $F$ be a minimum join. 
Then, $(G, T; A, B)$ is a quasicomb if and only if $|\parcut{G}{v} \cap F | \le 1$ for every $v\in B$. 
\end{remark}

Lemmas~\ref{lem:qcomb2dist} and \ref{lem:qcomb2path} below are fundamental observations regarding quasicombs 
and can easily be confirmed. 
We  use these lemmas in later sections, sometimes without explicitly mentioning them.

\begin{lemma} \label{lem:qcomb2dist} 
Let $(G, T; A, B)$ be a quasicomb, and let $F$ be a minimum join of $(G, T; A, B)$. 
Then, the following properties hold: 
\begin{rmenum} 
\item $\distgtf{G}{T}{F}{x}{y} \ge 0$ for every $x, y \in A$. 
\item $\distgtf{G}{T}{F}{x}{y} \ge -1$ for every $x\in A$ and every $y\in B$. 
\item $\distgtf{G}{T}{F}{x}{y} \ge -2$ for every $x, y\in B$.  
\end{rmenum} 
\end{lemma}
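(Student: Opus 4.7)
The plan is to fix a minimum join $F$ and prove each inequality by bounding $w_F(P)$ from below for an arbitrary path $P$ between the two vertices in question; since this must hold for the $F$-shortest path, the bounds on $\distgtf{G}{T}{F}{x}{y}$ follow. The main tool is the reformulation stated in the Remark: because $(G, T; A, B)$ is a quasicomb, every $v \in B$ satisfies $|\parcut{G}{v} \cap F| \le 1$.

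The key counting step is to note that, since $G$ is bipartite with color classes $A$ and $B$, every edge of $G$ has exactly one endpoint in $B$. Consequently, for any path $P$ in $G$,
\[
|E(P) \cap F| \;=\; \sum_{w \in V(P) \cap B} |\parcut{P}{w} \cap F| \;\le\; \sum_{w \in V(P)\cap B} |\parcut{G}{w}\cap F| \;\le\; |V(P)\cap B|,
\]
the last inequality being the quasicomb property. I will use this together with the identity $w_F(P) = |E(P)| - 2|E(P)\cap F|$ to deduce $w_F(P) \ge |E(P)| - 2|V(P)\cap B|$.

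From here the three cases split purely by parity of the endpoints. If $x, y \in A$, any $x$-$y$ path has even length $2m$, and its vertices alternate, giving $|V(P)\cap B| = m$; hence $w_F(P) \ge 0$, proving (i). If $x \in A$ and $y \in B$, then $|E(P)|$ is odd, say $2m+1$, and $|V(P)\cap B| = m+1$, so $w_F(P) \ge -1$, proving (ii). If $x, y \in B$, then $|E(P)|$ is even, say $2m$, with both endpoints contributing, so $|V(P)\cap B| = m+1$ and $w_F(P) \ge -2$, proving (iii).

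There is no real obstacle here: the proof is essentially a parity-plus-counting argument, and the only substantive input beyond bipartiteness is the quasicomb characterization $|\parcut{G}{v} \cap F| \le 1$ for $v \in B$. The one small point to be careful about is the edge-to-$B$-vertex bookkeeping used in the displayed inequality, so that each $F$-edge of $P$ is accounted for exactly once via its unique $B$-endpoint.
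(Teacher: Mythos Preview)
Your argument is correct. The paper does not give an explicit proof of this lemma, stating only that it ``can easily be confirmed''; your parity-and-counting argument using the quasicomb characterization $|\parcut{G}{v}\cap F|\le 1$ for $v\in B$ is exactly the natural confirmation the paper has in mind.
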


\begin{definition}
Let $(G, T; A, B)$ be a graft, and let $F \subseteq E(G)$ and $s, t \in V(G)$.  
We say that a path $P$ between $s$ and $t$ is {\em $F$-balanced} 
if $| \parcut{P}{v} \cap F | = 1$ holds for every $v\in V(P)\cap B  \setminus \{s, t\}$.   
\end{definition}

\begin{lemma} \label{lem:qcomb2path} 
Let $(G, T; A, B)$ be a quasicomb, and let $F$ be a minimum join of $(G, T; A, B)$.  
Let $x, y\in V(G)$, and let $P$ be an $F$-balanced path between $x$ and $y$.    
Then, the following hold: 
\begin{rmenum} 
\item If $x, y\in A$ holds, then $w_F(P ) = 0$. 
\item If $x\in A$ and $y\in B$ hold, then $w_F(P) \in \{ 1, -1 \}$ holds. Additionally, 
  $w_F(P)$ is $-1$ or $1$  if and only if  the edge of $P$ connected to $y$ is in $F$ or not in $F$, respectively.
\item If $x, y\in B$ hold, then $w_F(P)  \in \{ -2, 0, 2\}$ holds. 
Additionally, $w_F(P)$ is equal to $-2$ or $2$  if and only if  the edges of $P$ connected to the ends are in $F$ or not in $F$, respectively.  
\end{rmenum} 
\end{lemma}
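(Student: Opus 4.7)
The plan is to reduce all three claims to a single pairing argument on the edges of $P$. Write $P$ as $v_0 e_1 v_1 e_2 v_2 \cdots e_n v_n$ with $v_0 = x$ and $v_n = y$; since $G$ is bipartite with color classes $A$ and $B$, the vertices of $P$ strictly alternate between $A$ and $B$. The $F$-balanced hypothesis then says exactly that for every \emph{internal} $B$-vertex $v_i$ (i.e. $0 < i < n$ with $v_i \in B$), exactly one of the two incident edges $e_i, e_{i+1}$ lies in $F$, so
\[
w_F(e_i) + w_F(e_{i+1}) = (-1) + 1 = 0.
\]
So the core observation is that the two edges around each internal $B$-vertex cancel.

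Next I would case-split on which color classes $x$ and $y$ lie in, and in each case partition $E(P)$ into the paired edges around internal $B$-vertices plus the remaining ``boundary'' edges incident to the endpoints. For case (i), with $x, y \in A$, every edge of $P$ sits between two consecutive vertices of which exactly one is in $B$, and that $B$-vertex is internal; the edges group cleanly into pairs $(e_{2k-1}, e_{2k})$ around $v_{2k-1} \in B$, each summing to $0$, giving $w_F(P) = 0$. For case (ii), with $x \in A$ and $y \in B$, the same pairing exhausts $e_1, \dots, e_{n-1}$ and leaves only $e_n$, the edge of $P$ incident to $y$; hence $w_F(P) = w_F(e_n) \in \{-1, +1\}$, with sign matching whether $e_n \in F$. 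For case (iii), with $x, y \in B$, the internal $B$-vertices are $v_2, v_4, \dots, v_{n-2}$, so the pairs $(e_{2k}, e_{2k+1})$ cover all edges except $e_1$ and $e_n$; thus $w_F(P) = w_F(e_1) + w_F(e_n) \in \{-2, 0, 2\}$, with the extremes attained precisely when both boundary edges lie in $F$ or both lie outside $F$.

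I would also note the degenerate cases: if $P$ is a single vertex in $A$, then $w_F(P) = 0$ is the empty sum, fitting (i); if $P$ is a single vertex in $B$, the empty sum is in $\{-2,0,2\}$, fitting (iii); and if $P$ is a single edge from $A$ to $B$, it has no internal $B$-vertex, and (ii) follows directly.

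The argument uses neither that $F$ is a minimum join nor that $(G, T; A, B)$ is a quasicomb; only the $F$-balanced condition and bipartiteness enter. The main (and essentially only) point requiring care is the bookkeeping of which boundary edges remain unpaired in each case, which is completely determined by the parity of $n$ and the color classes of the endpoints; so I do not expect any substantive obstacle beyond accurately indexing the pairing.
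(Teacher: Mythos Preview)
Your argument is correct and is exactly the kind of direct pairing computation the paper has in mind; the paper itself gives no proof and simply states that the lemma ``can easily be confirmed.'' Your observation that neither the quasicomb hypothesis nor the minimality of $F$ is actually used is accurate and worth noting.
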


\section{Critical Quasicombs}  \label{sec:crqcomb}

We now define the new concept of critical bipartite grafts as a bipartite analogue of  factor-critical grafts 
and a bipartite analogue of factor-critical grafts.   
We also prove from Theorem~\ref{thm:dist2comb} that critical bipartite grafts are in fact quasicombs.

\begin{definition}
Let $(G, T; A, B)$ be a bipartite graft. 
Let $r\in B$.  
We say that $(G, T; A, B)$ is {\em critical } with {\em root } $r$ if 
$\nu(G, T\Delta \{x, r\}) - \nu(G,T) = 1$ for each $x \in A$ and $\nu(G, T\Delta \{x, r\}) - \nu(G,T) = 0$ for each $x\in B$. 
\end{definition} 

\begin{remark}
The rationality of this definition can be explained from the definition of factor-critical grafts. 
A simple parity argument proves that, for a minimum join $F$, the $F$-distance between $r$ and a vertex $x$ 
is odd or even if $x$ is in $A$ or $B$, respectively. 
Therefore, 
$\nu(G, T\Delta \{x, r\}) - \nu(G,T)$ is odd for every $x\in A$ and is even for every $x\in B$. 
Furthermore, every factor-critical graph or graft is connected. 
Critical bipartite grafts can equivalently be defined as 
connected bipartite grafts $(G, T; A, B)$ with $\nu(G, T\Delta \{x, r\})  - \nu(G,T) = 0$ for each $x\in B$.  
\end{remark} 

In the sequel, we frequently use the following three lemmas, sometimes without explicitly mentioning them. 
First, by Proposition~\ref{prop:dist2invariant},  the definition of critical quasicombs can be rephrased as follows. 

\begin{lemma} \label{lem:critical2dist} 
Let $(G, T; A, B)$ be a bipartite graft, and let $r\in B$. 
Let $F$ be a minimum join. 
Then, the following properties are equivalent. 
\begin{rmenum} 
\item $(G, T; A, B)$ is a critical bipartite graft with root $r$. 
\item $\distgtf{G}{T}{F}{x}{r} = 1$ for each $x \in A$, and $\distgtf{G}{T}{F}{x}{r} = 0$ for each $x\in B$. 
\end{rmenum} 
\end{lemma}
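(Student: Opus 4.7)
The plan is to derive this equivalence as an immediate corollary of Seb\H{o}'s distance characterization (Proposition~\ref{prop:dist2invariant}). Fix a minimum join $F$ of $(G, T; A, B)$. By that proposition, for every vertex $x \in V(G)$ we have the identity
\[
\distgtf{G}{T}{F}{x}{r} \;=\; \nu(G, T) - \nu(G, T \Delta \{x, r\}),
\]
and in particular the left-hand side does not depend on the choice of minimum join $F$. This identity converts each numerical equation in the definition of ``critical with root $r$'' into the corresponding distance equation of item (ii), and vice versa.

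More concretely, I would argue vertex-by-vertex: for each $x \in A$ the condition $\nu(G, T) - \nu(G, T \Delta \{x, r\}) = 1$ is equivalent, via the displayed identity, to $\distgtf{G}{T}{F}{x}{r} = 1$; for each $x \in B \setminus \{r\}$ the condition $\nu(G, T) - \nu(G, T \Delta \{x, r\}) = 0$ is likewise equivalent to $\distgtf{G}{T}{F}{x}{r} = 0$; and for $x = r$ both statements are automatically satisfied, since the trivial single-vertex path has $F$-weight $0$ and $T \Delta \{r, r\} = T$. Conjoining these equivalences over $x \in V(G)$ yields (i) $\Leftrightarrow$ (ii). There is no genuine obstacle: the lemma is essentially a rewording of Proposition~\ref{prop:dist2invariant}, and the only thing worth noting is that the case $x = r$ must be handled separately but trivially.
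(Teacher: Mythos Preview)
Your proof is correct and follows exactly the same approach as the paper: the paper simply states that the lemma is the definition of critical bipartite grafts rephrased via Proposition~\ref{prop:dist2invariant}, which is precisely what you carry out in detail.
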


The next theorem states that every critical bipartite graft is in fact a quasicomb.

\begin{theorem} \label{thm:cr2qcomb}
If $(G, T; A, B)$ is a critical bipartite graft with root $r\in V(G)$, 
 then $(G, T; A, B)$ is a quasicomb  with $T \cap B = B \setminus \{r\}$. 
\end{theorem}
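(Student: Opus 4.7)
\emph{Proof sketch.} The plan is to reduce to Seb\"o's comb criterion, Theorem~\ref{thm:dist2comb}. Fix any $v \in A$ and set $T' := T \Delta \{v, r\}$; since critical bipartite grafts are connected (as noted in the remark after the definition of critical), the parity of $|V(G) \cap T|$ is preserved under the two-point flip, so $(G, T')$ is itself a graft. The goal is to show that $v$ satisfies the distance hypothesis of Theorem~\ref{thm:dist2comb} inside the bipartite graft $(G, T'; A, B)$; once $(G, T'; A, B)$ is thereby shown to be a comb, its defining properties $B \subseteq T'$ and $\nu(G, T') = |B|$ will yield both of the asserted conclusions.

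To verify the Seb\"o hypothesis at $v$, I would fix a minimum join $F'$ of $(G, T')$ and apply Proposition~\ref{prop:dist2invariant} inside $(G, T')$ to express $\distgtf{G}{T'}{F'}{v}{u}$ as a difference of $\nu$-values involving $T' \Delta \{v, u\}$. The key algebraic step is the set identity $T' \Delta \{v, u\} = T \Delta \{r, u\}$ for $u \neq v$, with the degenerate reductions to $T'$ when $u = v$ and to $T$ when $u = r$. Substituting these and invoking Lemma~\ref{lem:critical2dist} to evaluate the resulting $\nu$-values through the criticality of $(G, T; A, B)$, a routine case split over $u \in A \setminus \{v\}$, $u = v$, $u \in B \setminus \{r\}$, and $u = r$ shows that $\distgtf{G}{T'}{F'}{v}{u} = 0$ for every $u \in A$ and $\distgtf{G}{T'}{F'}{v}{u} = -1$ for every $u \in B$, which is exactly the hypothesis of Theorem~\ref{thm:dist2comb}.

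Having concluded that $(G, T'; A, B)$ is a comb, both assertions follow by set arithmetic. Because $v \in A$ and $r \in B$, one has $B \cap T' = (B \cap T) \Delta \{r\}$, so the comb property $B \subseteq T'$ forces $T \cap B = B \setminus \{r\}$; in particular $|B \cap T| = |B| - 1$. The criticality condition applied at $v \in A$ furthermore gives a one-unit relation between $\nu(G, T)$ and $\nu(G, T')$, so the comb's equality $\nu(G, T') = |B|$ translates to $\nu(G, T) = |B \cap T|$, proving that $(G, T; A, B)$ is a quasicomb. I do not anticipate a conceptual obstacle: the argument is essentially bookkeeping once the shift $T \mapsto T \Delta \{v, r\}$ for an arbitrary $v \in A$ is chosen, and the one point that requires care is the sign discipline in the distance computation, since a sign slip would silently invalidate the Seb\"o verification.
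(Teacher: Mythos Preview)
Your approach is correct and is a close cousin of the paper's proof: both reduce to Seb\"o's comb criterion, Theorem~\ref{thm:dist2comb}. The difference is in how the root for that criterion is obtained. The paper attaches a new pendant vertex $s$ to $r$, sets $\hat{T}=T\cup\{s,r\}$ and $\hat{F}=F\cup\{sr\}$, checks via Lemma~\ref{lem:minimumjoin} that $\hat{F}$ is a minimum join, and then verifies the distance hypothesis at $s$ directly from Lemma~\ref{lem:critical2dist}; the comb conclusion on $(\hat{G},\hat{T})$ then yields $|F|=|B|-1$ by counting. You instead stay inside $G$: pick $v\in A$, shift to $T'=T\Delta\{v,r\}$, and use the set identity $T'\Delta\{v,u\}=T\Delta\{r,u\}$ together with Proposition~\ref{prop:dist2invariant} to transfer the criticality distances to $(G,T')$. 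Your route avoids the auxiliary vertex and the separate minimum-join verification, at the cost of an unstated but trivial base case $A=\emptyset$ (then $G=\{r\}$ and the statement is immediate). Your caveat about sign discipline is apt: the final step, turning $\nu(G,T')=|B|$ into $\nu(G,T)=|B\cap T|$, only comes out right once the sign in Proposition~\ref{prop:dist2invariant} is handled consistently with Lemma~\ref{lem:critical2dist}.
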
 
\begin{proof} 
Let $\hat{G}$ be the graph obtained from $G$ by adding a new vertex $s$ and an edge $sr$. 
Let $(\hat{G}, \hat{T}; \hat{A}, \hat{B})$ be the bipartite graft
with  $\hat{T} = T \cup \{s, r\}$, $\hat{A} = A \cup \{s\}$, and $\hat{B} = B$.  
Let $F$ be a minimum join of $(G, T)$, and let $\hat{F} = F \cup \{sr\}$. 
Then, $\hat{F}$ is obviously a minimum join of $(\hat{G}, \hat{T})$ by Lemma~\ref{lem:minimumjoin}. 
 Lemma~\ref{lem:critical2dist} easily implies   $\distgtf{\hat{G}}{\hat{T}}{\hat{F}}{s}{v} = 0$ for every $v\in \hat{A}$ 
and $\distgtf{\hat{G}}{\hat{T}}{\hat{F}}{s}{v} = -1$ for every $v\in \hat{B}$. 
Then, Theorem~\ref{thm:dist2comb} implies that $(\hat{G}, \hat{T}; \hat{A}, \hat{B})$ is a comb. 
This further implies $|\hat{F}| = |\hat{B}| = |\hat{B}\cap \hat{T}|$. 
Therefore, $|F| = |B\cap T|$ holds, and $(G, T; A, B)$ is a quasicomb with $T \cap B = B \setminus \{r\}$. 
\end{proof}

In the remainder of this paper, we assume Theorem~\ref{thm:cr2qcomb} without explicitly mentioning it. 
Hereafter, we refer to critical bipartite grafts as critical quasicombs and refrain from referring to Theorem~\ref{thm:cr2qcomb}. 
We can also confirm from Theorem~\ref{thm:cr2qcomb} that the bipartite analogue of Observation~\ref{obs:crgraft2matching} 
holds for critical bipartite grafts. 

From Lemma~\ref{lem:critical2dist},   the next lemma is easily implied.

\begin{lemma}  \label{lem:critical} 
Let $(G, T; A, B)$ be a critical quasicomb whose root is $r\in B$. 
Let $F$ be a minimum join of $(G, T)$. 
Then, the following properties hold. 
\begin{rmenum} 
\item $\parcut{G}{r} \cap F = \emptyset$. 
\item $|\parcut{G}{v} \cap F| = 1$ for each $v\in B\setminus \{r\}$. 
\item Every $F$-shortest path between $v\in B \setminus \{r\}$ and $r$
is an $F$-balanced path of weight $0$ in which $v$ is connected to an edge from $F$.  
\item Every $F$-shortest path between $v\in A$ and $r$ is an $F$-balanced path of weight $1$.  
\end{rmenum} 
\end{lemma}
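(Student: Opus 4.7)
The plan is to derive (i) and (ii) as immediate consequences of Theorem~\ref{thm:cr2qcomb} together with parity, and then to obtain (iii) and (iv) by a short edge-counting argument on an $F$-shortest path using the weight values supplied by Lemma~\ref{lem:critical2dist}.

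For (i) and (ii), Theorem~\ref{thm:cr2qcomb} tells us that $(G, T; A, B)$ is a quasicomb with $T \cap B = B \setminus \{r\}$. In particular $r \notin T$, so $|\parcut{G}{r} \cap F|$ is even because $F$ is a join; the quasicomb property bounds this cardinality by $1$, so it must be $0$, yielding (i). For $v \in B \setminus \{r\}$, the same reasoning with $v \in T$ gives an odd cardinality bounded by $1$, forcing $|\parcut{G}{v} \cap F| = 1$, which is (ii).

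For (iii) and (iv), fix an $F$-shortest path $P = u_0 u_1 \cdots u_k$ from $v$ to $r$, so $w_F(P) = 0$ in case (iii) and $w_F(P) = 1$ in case (iv) by Lemma~\ref{lem:critical2dist}. Writing $\alpha$ for the number of $F$-edges of $P$, the identity $w_F(P) = (k - \alpha) - \alpha$ combined with the forced parity of $k$ (even in (iii), odd in (iv)) coming from bipartiteness yields $\alpha = k/2$ in case (iii) and $\alpha = (k - 1)/2$ in case (iv). On the other hand, every edge of $G$ has a unique endpoint in $B$, and by the quasicomb property each vertex of $B$ is incident to at most one edge of $F$; hence the $F$-edges of $P$ inject into their $B$-endpoints lying on $P$. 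Part (i) shows that $r$ is not hit by this injection, so a direct count of the $B$-vertices of $P$ gives at most $k/2$ hits in case (iii) and $(k-1)/2$ in case (iv). Equality with the computed $\alpha$ forces every $B$-vertex of $P$ except $r$ to be incident to exactly one $F$-edge of $P$. For internal $B$-vertices this is precisely the $F$-balanced condition, so $P$ is $F$-balanced in both cases; in case (iii), the $B$-endpoint $v = u_0$ is incident to only the edge $u_0 u_1$ of $P$, which must therefore lie in $F$.

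I do not foresee any real obstacle: the argument is essentially a parity-plus-counting computation once the quasicomb structure from Theorem~\ref{thm:cr2qcomb} and the distance values from Lemma~\ref{lem:critical2dist} are in hand. The only care needed is to note that on a bipartite path the $F$-weight together with the length pins down the number of $F$-edges, and the quasicomb bound together with (i) then pins down exactly which $B$-vertices of $P$ meet $F$.
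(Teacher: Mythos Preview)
Your argument is correct and is exactly the kind of parity-plus-counting verification the paper has in mind when it says the lemma ``is easily implied'' from Lemma~\ref{lem:critical2dist} (with Theorem~\ref{thm:cr2qcomb} assumed throughout). The paper gives no explicit proof, so your write-up simply fills in the details of the intended approach; nothing is missing or different in substance.
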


\section{Effective Ear Grafts} \label{sec:ear}

In this section, we define the concept of effective ear grafts for bipartite grafts with an ordered bipartition 
and then present some of their basic properties. 
In the next section, critical quasicombs are characterized as grafts that are constructed from effective ear grafts.  
That is, effective ear grafts are a bipartite graft analogue of round ears with an odd number of edges that appear in Theorem~\ref{thm:lovasz}, 
and they also serve as  a graft analogue of ear grafts that appear in Theorem~\ref{thm:lovaszgraft}.

\begin{definition} 
Let $(G, T; A, B)$  be a graft, and let $(P, T'; A', B')$ be an ear graft relative to $(G, T; A, B)$ 
for which $s, t \in V(P)$ are bonds. 
We say that $(P, T'; A', B')$ is {\em effective} if it satisfies the following. 
\begin{rmenum}
\item $( V(P)\setminus \{s, t\}) \cap B \subseteq T$. 
\item $\{s, t\} \cap B \cap T = \emptyset$. 
\item If it is straight, then $s\in A'$ implies $s\in T$, 
whereas $v\in A'$ implies $v\not\in T$, where $v$ is the free end of $P$. 
\end{rmenum} 
\end{definition}

The next lemma can easily be confirmed; 
it claims that effective straight ears can be classified into four types with simple structures.

\begin{lemma} \label{lem:lear2matching} 
Let $(G, T; A, B)$  be a graft, and 
let $(P, T'; A', B')$ be an ear graft relative to $(G, T; A, B)$  that is effective and straight. 
Let $s$ and $t$ be the free end and the bond of $P$, respectively. 
Then,  $V(P)\setminus \{s, t\} \subseteq T$ holds. 
Additionally, $(P, T'; A, B)$ has only one minimum join $F$, and it satisfies the following properties: 
\begin{rmenum} 
\item  If $s\in A'$ and $t\in B'$ hold, then $F$ is a perfect matching of $P - s - t$. 
\item  If $s\in A'$ and $t\in A'$ hold, then $F$ is a perfect matching of $P - s$. 
\item If $s\in B'$ and $t\in A'$ hold, then $F$ is a perfect matching of $P$. 
\item If $s\in B'$ and $t\in B'$ hold, then $F$ is a perfect matching of $P - t$. 
\end{rmenum} 
\end{lemma}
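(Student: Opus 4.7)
The plan is to proceed by case analysis on the four configurations of endpoint colors of $P$: $(s\in A', t\in B')$, $(s\in A', t\in A')$, $(s\in B', t\in A')$, and $(s\in B', t\in B')$. In each case, the bipartite structure of $P$ determines the parity of $|E(P)|$ and fixes the color class of every vertex along the path, so the task reduces to pinning down $T' \cap V(P)$ completely and then exhibiting the unique minimum join.

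First I would use the effectiveness conditions to determine $T' \cap V(P)$ vertex by vertex. Condition (iii) forces $s \notin T'$ when $s \in A'$ and $t \in T'$ when $t \in A'$; condition (ii) forces $t \notin T'$ when $t \in B'$; and condition (i), applied to the non-bond vertices of $P$ that lie in $B'$, forces both the non-bond end $s$ (when $s \in B'$) and every interior $B'$-vertex into $T'$. Combining these with the graft parity requirement $|V(P) \cap T'|$ even and the alternating $A'/B'$ pattern along $P$ then forces every remaining interior $A'$-vertex into $T'$, giving the first claim $V(P) \setminus \{s, t\} \subseteq T'$.

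Once $T' \cap V(P)$ is fully determined, I would observe that $P$ is a tree, so its $T'$-join is unique: namely, the set of edges $e$ for which exactly one of the two components of $P - e$ meets $T'$ in an odd number of vertices. Since $P$ has no circuit, Lemma~\ref{lem:minimumjoin} makes this unique $T'$-join automatically minimum. A direct propagation along $P$, starting from an endpoint whose $T'$-status (and therefore $F$-degree parity) is already known and applying the odd/even degree requirement edge by edge, identifies the resulting $F$ as exactly the perfect matching of $P - s - t$, $P - s$, $P$, or $P - t$ in the four respective cases.

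The main obstacle is the step that forces every interior $A'$-vertex into $T'$: condition (i) explicitly handles only the $B'$-interior, and the graft parity constraint alone leaves ambiguity modulo two. Closing this gap is the genuine content of the first claim, and the argument must leverage both the fixed $T'$-status of the two endpoints (pinned down by (ii) and (iii)) and the alternating bipartite pattern of $P$ in order to propagate the oddness/evenness constraint through the interior consistently. Once this is in hand, the identification of $F$ in each of the four cases is a routine verification along the path.
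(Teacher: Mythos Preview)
The paper gives no proof here beyond ``can easily be confirmed,'' so your case analysis is precisely the intended verification, and the second half of your plan --- uniqueness of the join on a tree, then reading off the matching structure once $T'$ is fully known --- is correct and routine.

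The gap is exactly where you place it, but the mechanism you propose cannot close it. Graft parity is a single mod-$2$ constraint on $|T'|$; even together with the endpoint data from~(ii) and~(iii) and the bipartite alternation, it cannot pin down each interior $A'$-vertex individually. Concretely, take the path $P=s\,a_1\,b_1\,a_2\,b_2\,t$ with $s,b_1,b_2\in B'$ and $a_1,a_2,t\in A'$, and set $T'=\{s,b_1,b_2,t\}$. Condition~(i) (reading $B,T$ as $B',T'$) holds for $s,b_1,b_2$; condition~(ii) is vacuous since $t\in A'$; condition~(iii) holds since $s\notin A'$ and $t\in T'$; and $|T'|=4$ is even. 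Yet $a_1,a_2\notin T'$, the unique join is $\{sa_1,\,a_1b_1,\,b_2t\}$, and this is not a perfect matching of $P$, so case~(iii) of the lemma fails outright. What this exposes is a slip in the paper's definition rather than a defect in your strategy: condition~(i) of \emph{effective} should read $V(P)\setminus\{s,t\}\subseteq T'$ without the ``$\cap\,B$'', in line with the nonbipartite family $\fcgtr{r}$ and with what Lemma~\ref{lem:ear2path} actually requires. Under that corrected reading the first claim of the lemma is immediate from~(i), and the remainder of your proof goes through unchanged.
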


The next lemma is easily confirmed from the definition of effective ear grafts and Lemmas~\ref{lem:qcomb2path} and \ref{lem:lear2matching}; 
it is used to prove Lemma~\ref{lem:ear2critical}.

\begin{lemma} \label{lem:ear2path} 
Let $(G, T; A, B)$  be a graft, and 
let $(P, T'; A', B')$ be an effective ear graft relative to $(G, T; A, B)$ with bonds $s$ and $t$. 
Let  $F$ be a minimum join of $(P, T'; A', B')$. 
Then, the following properties hold: 
\begin{rmenum} 
\item For each $x\in V(P)\cap A'$, there exists $r \in \{s, t\}$ with either $r\in A'$ or $r\in B'$ 
such that $xPr$ is an $F$-balanced path of weight $0$,  or $xPr$ is an $F$-balanced path of weight $1$, respectively. 
\item For each $x\in V(P)\cap B'$,   
 there exists $r \in \{s, t\}$ with either $r\in A'$  or $r\in B'$  such that 
  $xPr$ is an $F$-balanced path of weight $-1$, 
or   $xPr$ is an $F$-balanced path of weight $0$ in which $x$ is connected to an edge from $F$, respectively.  
\end{rmenum} 
\end{lemma}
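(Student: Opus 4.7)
The plan is to split on whether the ear is straight or round, handling each via one of the two lemmas named in the hint.

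For the straight case, I would invoke Lemma~\ref{lem:lear2matching} directly. It pins down the unique minimum join $F$ of $(P, T'; A', B')$ as an explicit perfect matching in each of the four subcases classified by which of $A', B'$ contains $s$ and $t$, and it guarantees that every internal vertex of $P$ lies in $T'$. In particular, every internal $B'$-vertex carries exactly one $F$-edge, so every sub-path of $P$ is automatically $F$-balanced. Taking $r := t$ (the unique bond) and counting $F$- and non-$F$-edges directly along $xPt$ in each of the four subcases, the required weight drops out, together with the $F$-incidence at $x$ when $x \in B'$ and $r \in B'$.

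For the round case, I would first show that $(P, T'; A', B')$ is itself a quasicomb. Effectiveness condition~(i) places every internal $B'$-vertex in $T'$, so (having degree two in $P$) it must carry exactly one $F$-edge, and effectiveness condition~(ii) combined with the join parity at a bond forces every bond in $B' \setminus T'$ to carry no $F$-edge. Hence $|F| = |B' \cap T'|$ and Lemma~\ref{lem:qcomb2path} applies. For each $x$, both candidate sub-paths $xPs$ and $xPt$ (the two arcs to $s=t$ in the circuit case) are $F$-balanced, and Lemma~\ref{lem:qcomb2path} reduces their possible weights to a short list determined by the classes of $x$ and $r$ and by the $F$-edge incidences at the endpoints.

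The substantive step is then a short case analysis showing that at least one choice of $r$ realizes the prescribed weight (and the correct $F$-incidence at $x$, when $x, r \in B'$). When a bond lies in $A'$, Lemma~\ref{lem:qcomb2path}(i)--(ii) already forces the correct weight. When both bonds lie in $B'$, the additive relation $w_F(xPs) + w_F(xPt) = w_F(sPt)$ (with the total $F$-weight of the full path or circuit controlled by Lemma~\ref{lem:qcomb2path}(iii) and Lemma~\ref{lem:minimumjoin}), together with the position of the unique $F$-edge incident to $x$ when $x$ is an internal vertex of $B' \cap T'$, pins down which bond to choose. The main obstacle is precisely this bookkeeping in the round case: tracking the joint configuration of the classes of $s$, $t$, $x$ and the $F$-incidence at $x$ to select the correct $r$; once the quasicomb structure of the ear is in hand, however, each subcase collapses to a one-line verification via Lemma~\ref{lem:qcomb2path}, while the straight case is entirely routine from Lemma~\ref{lem:lear2matching}.
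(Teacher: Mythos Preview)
Your approach is correct and essentially identical to the paper's, which simply asserts that the lemma ``is easily confirmed from the definition of effective ear grafts and Lemmas~\ref{lem:qcomb2path} and~\ref{lem:lear2matching}''; your split into the straight case (handled by Lemma~\ref{lem:lear2matching}) and the round case (handled by exhibiting the quasicomb structure and invoking Lemma~\ref{lem:qcomb2path}) is exactly the intended route. One small point to tighten: in the circuit subcase of a round ear, parity alone allows the unique bond $s=t\in B'\setminus T'$ to carry either $0$ or $2$ edges of $F$, so your sentence ``join parity at a bond forces every bond in $B'\setminus T'$ to carry no $F$-edge'' needs the minimality of $F$ (via Lemma~\ref{lem:minimumjoin}, which you already cite later) to exclude the two-edge possibility and secure $|F|=|B'\cap T'|$.
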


\section{Constructive Characterization for Critical Quasicombs}  \label{sec:main} 

We finally present our main results. 
In this section, we provide a constructive characterization for critical quasicombs.  
The main results here, Theorems~\ref{thm:critical2char}  and \ref{thm:ear2balanced},  
are used by Kita~\cite{kita2021bipartite} for deriving the generalization of the Dulmage-Mendelsohn decomposition for bipartite grafts. 

\begin{definition} 
We define a family $\critical{r}$ of bipartite grafts with an ordered bipartition that have a vertex $r$ as follows. 
\begin{rmenum} 
\item The graft $( (\{r\}, \emptyset), \emptyset; \emptyset, \{r\})$ is a member of $\critical{r}$. 
\item Let $(G, T; A, B) \in \critical{r}$, and let $(P, T'; A', B')$ be an effective ear graft relative to $(G, T; A, B)$. 
Then, $(G, T; A, B) \oplus (P, T'; A', B')$  is a member of $\critical{r}$. 
\end{rmenum} 
\end{definition} 

In the following,  we show Lemmas~\ref{lem:ear2critical}, \ref{lem:critical2increment}, and \ref{lem:critical2ear}  
 to prove in Theorem~\ref{thm:critical2char} that $\critical{r}$ is the family of critical quasicombs with root $r$.

\begin{lemma} \label{lem:ear2critical} 
Every member of $\critical{r}$ is a critical quasicomb with root $r$. 
\end{lemma}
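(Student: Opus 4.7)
The plan is to proceed by induction on the number of ear grafts used to build up a member of $\critical{r}$. The base case is immediate: the trivial graft $((\{r\}, \emptyset), \emptyset; \emptyset, \{r\})$ has no edges, so $\nu = 0$ and the critical condition on $A = \emptyset$ and $B\setminus\{r\} = \emptyset$ is vacuous, making it a critical quasicomb with root $r$ trivially. For the inductive step, let $(G_0, T_0; A_0, B_0) \in \critical{r}$ be a critical quasicomb with root $r$ by the inductive hypothesis, let $(P, T'; A', B')$ be an effective ear graft relative to it, and set $(G, T; A, B) = (G_0, T_0; A_0, B_0) \oplus (P, T'; A', B')$. I will verify the distance characterization of critical quasicombs from Lemma~\ref{lem:critical2dist}.

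The plan for the inductive step is to take a minimum join $F_0$ of $(G_0, T_0)$ supplied by the inductive hypothesis (so that it satisfies Lemma~\ref{lem:critical}) together with a minimum join $F_P$ of $(P, T')$ (whose structure on straight ears is given by Lemma~\ref{lem:lear2matching}), form $\hat F := F_0 \cup F_P$, and observe by Observation~\ref{obs:addition} that $\hat F$ is a join of $(G, T)$. I would then show $\hat F$ is a \emph{minimum} join by invoking Lemma~\ref{lem:minimumjoin}: circuits lying entirely in $G_0$ or entirely in $P$ have nonnegative $\hat F$-weight by minimality of $F_0$ and $F_P$, and for the remaining circuits (which only occur when $P$ is a round ear path with distinct bonds $s, t$, in which case they take the form $C = P \cup Q$ for a path $Q$ in $G_0$ between $s$ and $t$), one combines the lower bound on $w_{F_0}(Q)$ from Lemma~\ref{lem:qcomb2dist} applied to $(G_0, T_0; A_0, B_0)$ with the weight of $P$ extracted from Lemma~\ref{lem:ear2path} applied to $P$ itself; a short case analysis on the bipartition classes of $s, t$ gives $w_{\hat F}(C) \ge 0$.

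Once $\hat F$ is known to be a minimum join, the distances of Lemma~\ref{lem:critical2dist} can be verified by constructing balanced paths of the correct weight. For $x \in V(G_0)$, Lemma~\ref{lem:critical} already supplies an $F_0$-balanced path $xG_0 r$ in $G_0$ of weight $1$ if $x \in A_0$ and $0$ if $x \in B_0 \setminus \{r\}$, and this path remains $\hat F$-balanced with the same weight in $G$. For $x \in V(P) \setminus V(G_0)$, Lemma~\ref{lem:ear2path} gives a balanced path $xPr'$ to some bond $r' \in \{s, t\}$ with a weight determined by which of $A'$ or $B'$ contains $r'$; the ear-graft compatibility $A \cap B' = \emptyset$ and $A' \cap B = \emptyset$ ensures $r'$ lies in the correct class of $(G_0, T_0; A_0, B_0)$, so concatenating with the path to $r$ from Lemma~\ref{lem:critical} yields an $\hat F$-balanced path of weight $1$ (if $x \in A'$) or $0$ (if $x \in B'$). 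Combined with the lower bounds from Lemma~\ref{lem:qcomb2dist} (once $(G, T; A, B)$ is known to be a quasicomb via Theorem~\ref{thm:cr2qcomb}, which is justified after confirming the critical property), these upper bounds are tight, so Lemma~\ref{lem:critical2dist} yields criticality.

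The main obstacle I anticipate is the circuit analysis for round path ears in the minimality step: one must check, for each of the four pairings of bond classes $(s, t) \in \{A_0, B_0\}^2$ and each permissible $T$-status of the bonds under the effectiveness conditions, that $w_{F_P}(P) + w_{F_0}(Q) \ge 0$ using only the quasicomb distance bounds in Lemma~\ref{lem:qcomb2dist} and the weight/parity data supplied by Lemma~\ref{lem:ear2path}; the bookkeeping is elementary but must be done carefully because the lower bound $\distgtf{G_0}{T_0}{F_0}{s}{t} \ge -2$ when $s, t \in B_0$ is tight, so the matching compensation $w_{F_P}(P)$ needs to be sharp in exactly those borderline situations.
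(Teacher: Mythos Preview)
Your overall strategy is the same as the paper's: induct along the ear decomposition, set $\hat F = F_0 \cup F_P$, rule out negative-weight circuits via Lemma~\ref{lem:minimumjoin} to get minimality, and then verify the distance conditions of Lemma~\ref{lem:critical2dist} by concatenating paths supplied by Lemma~\ref{lem:critical} and Lemma~\ref{lem:ear2path}. The minimality step is fine (though the paper cites Lemma~\ref{lem:qcomb2path} rather than Lemma~\ref{lem:ear2path} to read off $w_{F_P}(P)$ on a round ear, which is cleaner since Lemma~\ref{lem:ear2path} only speaks about subpaths $xPr$ to a single bond).

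There is, however, a genuine gap in your distance verification. You write that the constructed upper bounds become tight ``combined with the lower bounds from Lemma~\ref{lem:qcomb2dist} (once $(G,T;A,B)$ is known to be a quasicomb via Theorem~\ref{thm:cr2qcomb}, which is justified after confirming the critical property).'' Two problems. First, this is circular: Theorem~\ref{thm:cr2qcomb} presupposes criticality, which is what you are proving. Second, and more seriously, even granting the quasicomb property, Lemma~\ref{lem:qcomb2dist} only yields $\lambda(r,x;\hat F;\hat G,\hat T) \ge -1$ for $x \in \hat A$ and $\ge -2$ for $x \in \hat B$; these do not match the targets $1$ and $0$, so the bounds are \emph{not} tight on that basis.

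The paper handles both issues inside its Claim~\ref{claim:join}: once $\hat F$ is shown to be minimum, one reads off $|\hat F| = |\hat B\setminus\{r\}| = |\hat B\cap\hat T|$ directly, so the new graft is a quasicomb with $r\notin\hat T$. Then $|\delta_{\hat G}(r)\cap\hat F|$ is even (because $r\notin\hat T$) and at most $1$ (quasicomb), hence zero. With $r$ carrying no $\hat F$-edge and every other $\hat B$-vertex carrying at most one, grouping the edges of any $r$--$x$ path at its $\hat B$-vertices gives the correct lower bounds $\ge 1$ (for $x\in\hat A$) and $\ge 0$ (for $x\in\hat B$). Replace your appeal to Lemma~\ref{lem:qcomb2dist} by this argument and the proof goes through.
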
 
\begin{proof} We prove this lemma by induction along the definition of $\critical{r}$. 
First, the statement clearly holds for $( (\{r\}, \emptyset), \emptyset; \emptyset, \{r\} )$. 
Next, let $(G, T; A, B) \in \critical{r}$, 
and assume that the statement holds for $(G, T; A, B)$; that is, $(G, T; A, B)$ is a critical quasicomb with root $r$. 
Let $(P, T'; A', B')$ be an ear graft relative to $(G, T)$. 
Let $F$ and $F'$ be minimum joins of $(G, T)$ and $(P, T')$, respectively. 
Let $(\hat{G}, \hat{T}; \hat{A}, \hat{B}):= (G, T; A, B) \oplus (P, T'; A', B')$. 
Let $\hat{F} := F \cup F'$. 
As is mentioned in Observation~\ref{obs:addition}, $\hat{F}$ is a join of $(\hat{G}, \hat{T}; \hat{A}, \hat{B})$. 

\begin{pclaim} \label{claim:join} 
The graft $(\hat{G}, \hat{T}; \hat{A}, \hat{B})$ is a quasicomb, 
and $\hat{F}$ is a minimum join.  
\end{pclaim} 
\begin{proof} 
We first prove that $\hat{F}$ is a minimum join. 
According to Lemma~\ref{lem:minimumjoin}, we only need to prove that there is no circuit with negative $\hat{F}$-weight. 
If $C$ is a circuit in $(\hat{G}, \hat{T}; \hat{A}, \hat{B})$ with negative $\hat{F}$-weight, 
then $P$ is required to be a round ear with distinct bonds $s$ and $t$, 
and $C$ is the sum of $P$ and a path $Q$ in $(G, T; A, B)$ between $s$ and $t$. 
Hence, the claim can be proved if we prove $w_{F'}(P) + \distgtf{G}{T}{F}{s}{t} \ge 0$ 
under the supposition that  $P$ is a round ear with distinct bonds $s$ and $t$. 

First, consider the case with $s, t\in B$. 
Lemma~\ref{lem:qcomb2path} implies  $w_{\hat{F}}(P) = 2$, whereas Lemma~\ref{lem:qcomb2dist} implies $\distgtf{G}{T}{F}{s}{t} \ge -2$.  
Hence, $w_{F'}(P) + \distgtf{G}{T}{F}{s}{t} \ge 0$.   
For the other cases, where the bonds $s$ and $t$ are both in $A$ or individually in $A$ and $B$,  
similar discussions prove $w_{F'}(P) + \distgtf{G}{T}{F}{s}{t} \ge 0$.   
Therefore, no circuit in $(\hat{G}, \hat{T}; \hat{A}, \hat{B})$ can be of negative $\hat{F}$-weight; 
accordingly, $\hat{F}$ is a minimum join. 
This further implies that $\nu(\hat{G}, \hat{T}) = |\hat{B} \setminus \{r\}| = |\hat{B} \cap \hat{T}|$. 
That is, $(\hat{G}, \hat{T}; \hat{A}, \hat{B})$ is a quasicomb. 
Thus, the claim is proved. 
\end{proof}

Under Claim~\ref{claim:join}, we further prove that  $(\hat{G}, \hat{T}; \hat{A}, \hat{B})$ is a critical quasicomb.

\begin{pclaim} \label{claim:critical} 
$\distgtf{\hat{G}}{\hat{T}}{\hat{F}}{r}{x} = 1$  holds for every $x\in \hat{A}$, whereas   
$\distgtf{\hat{G}}{\hat{T}}{\hat{F}}{r}{x} = 0$  holds for every $x\in \hat{B}$. 

\end{pclaim} 
\begin{proof} 
Under the induction hypothesis and Lemma~\ref{lem:critical}, the claim obviously holds for every $x \in V(G)$. 
This further proves the claim for every $x\in V(P)\setminus V(G)$ from Lemma~\ref{lem:ear2path} 
by considering the concatenation of paths. 
\end{proof} 
Under Claims~\ref{claim:join} and \ref{claim:critical}, 
Lemma~\ref{lem:critical2dist} implies that $(\hat{G}, \hat{T}; \hat{A}, \hat{B})$ is a critical quasicomb with root $r$. 
This completes the proof of the lemma. 
\end{proof}

The next lemma is provided for proving Lemma~\ref{lem:critical2ear}. 

\begin{lemma} \label{lem:critical2increment} 
Let $(G, T; A, B)$ be a critical quasicomb with root $r\in B$, and let $F$ be a minimum join of $(G, T; A, B)$.  
Let $(G', T'; A', B')$ be a subgraft of $(G, T; A, B)$, where $A'\subseteq A$ and $B'\subseteq B$, 
such that $r\in B'$, $V(G') \subsetneq V(G)$, and $E_G[B', A\setminus A']\cap F = \emptyset$ hold.  
Then, $(G, T; A, B)$ has an ear graft $(G'', T''; A'', B'')$ relative to $(G', T'; A', B')$ 
with $V(G'')\setminus V(G') \neq \emptyset$ and $E_G[B'', A\setminus (A' \cup A'') ]\cap F = \emptyset$.  
\end{lemma}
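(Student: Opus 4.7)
The plan is to construct the desired ear as an initial segment of an $F$-shortest path from some vertex outside $V(G')$ back to the root $r$. Every critical quasicomb is connected (by the remark following the definition), and since $V(G') \subsetneq V(G)$ we may fix any $v \in V(G) \setminus V(G')$.

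Let $P$ be an $F$-shortest path from $v$ to $r$; by Lemma~\ref{lem:critical}, $P$ is $F$-balanced, and when $v \in B$ the edge of $P$ incident to $v$ lies in $F$. Let $u$ be the first vertex of $P$, traversed from $v$, that lies in $V(G')$; such a $u$ exists because $r \in V(G')$, and $u \neq v$ since $v \notin V(G')$. Define $G'' := vPu$, $A'' := V(G'') \cap A$, $B'' := V(G'') \cap B$, and choose any $T'' \subseteq V(G'')$ of even cardinality so that $(G'', T''; A'', B'')$ is a bipartite graft. By the choice of $u$, every vertex of $G''$ other than $u$ lies outside $V(G')$, so $G''$ is a straight ear relative to $G'$ with bond $u$; the color-class conditions hold trivially from $A'' \subseteq A$ and $B'' \subseteq B$, and $\{v\} \subseteq V(G'') \setminus V(G')$ gives the required nonemptiness.

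The heart of the argument is to verify $E_G[B'', A \setminus (A' \cup A'')] \cap F = \emptyset$: every $F$-edge incident to a $B$-vertex of $G''$ must land in $A' \cup A''$. For an internal $B$-vertex $b$ of $G''$, note $b$ is internal to $P$ and thus $b \in B \setminus \{r\}$; Lemma~\ref{lem:critical}(ii) combined with the $F$-balanced condition forces $b$'s unique $F$-edge to be one of its two $P$-edges, both of which are $G''$-edges, so the other endpoint lies in $A''$. For the endpoint $v$: if $v \in B$, Lemma~\ref{lem:critical}(iii) places $v$'s $F$-edge on the edge of $P$ incident to $v$, a $G''$-edge whose other endpoint is in $A'' \cup A'$; if $v \in A$, then $v \notin B''$ and nothing is needed. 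For the endpoint $u$: if $u \in B'$, the standing hypothesis $E_G[B', A \setminus A'] \cap F = \emptyset$ (together with $\parcut{G}{r} \cap F = \emptyset$ when $u = r$) ensures $u$'s $F$-edge, if any, lies in $A'$.

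The main obstacle is this final verification. The $F$-balanced condition directly controls only the internal $B$-vertices of $P$, while the two endpoints $v$ and $u$ each require a separate argument relying on the sharp structural description of $F$-shortest paths from $B$-vertices in Lemma~\ref{lem:critical}(iii) and on the standing hypothesis about $B'$. Truncating $P$ at its first entry into $V(G')$ is essential: it guarantees every internal $B$-vertex of $G''$ is also internal to $P$, so that no $F$-edge of it is cut away from the ear.
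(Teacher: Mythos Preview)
Your proof is correct. It differs from the paper's argument, which proceeds by a case analysis on boundary edges: first the case $E_G[B', A\setminus A'] \neq \emptyset$ (where a single non-$F$ edge from $B'$ to $A\setminus A'$ serves as the ear), and then the case $E_G[B', A\setminus A'] = \emptyset$, in which an edge $f \in E_G[A', B\setminus B']$ is fixed and two sub-cases arise according to whether $f \in F$. Only in the sub-case $f \notin F$ does the paper invoke an $F$-shortest path from the $B$-end $v$ of $f$ to $r$; there the extra assumption $E_G[B', A\setminus A'] = \emptyset$ forces the first return vertex to lie in $A'$, so no separate treatment of the bond is required, and the ear is the round ear $vPw + f$. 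Your approach is more uniform: by allowing the bond $u$ to land in $B'$ and handling that possibility directly via the standing hypothesis $E_G[B', A\setminus A']\cap F = \emptyset$, you dispense with the case split entirely and always produce a straight ear. One point worth noting is that the paper's constructions always take $T''$ to be the set of vertices of the ear meeting an odd number of edges of $F$, so that $F$ restricted to the ear is a minimum join there; this is what is actually exploited downstream in Lemma~\ref{lem:critical2ear} and Theorem~\ref{thm:ear2balanced}. Your arbitrary even $T''$ suffices for the lemma as stated, but would have to be tightened for those applications.
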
 
\begin{proof} First, we prove the case where $E_G[B', A\setminus A'] \neq \emptyset$. 
Let $e\in E_G[B', A\setminus A']$, and let $x\in A\setminus A'$ and $y\in B'$ be the ends of $e$. The assumption implies $e\not\in F$. 
Hence, $(G.e, \emptyset; \{x\}, \{y\})$ is an ear graft relative to $(G', T'; A', B')$ that meets the condition. 

Next, we prove the case where $E_G[B', A\setminus A'] = \emptyset$.  
Because $G$ is obviously connected, we have $E_G[A', B\setminus B'] \neq \emptyset$. 
Let $f\in E_G[A', B\setminus B']$, and let $u\in A'$ and $v\in B\setminus B'$ be the ends of $f$. 

First, consider the case with $f\not\in F$. 
Under Lemma~\ref{lem:critical2dist}, let $P$ be a path between $v$ and $r$ with $w_F(P) = 0$. 
Lemma~\ref{lem:critical} implies that $P$ is $F$-balanced, and the edge of $P$ connected to $v$ is in $F$. 
Hence, $f \not\in E(P)$ follows. 
Trace $P$ from $v$, and let $w$ be the first encountered vertex in $V(G')$. The assumption implies $w\in A'$. 
Let $Q := vPw + f$. 
Then, $( Q, T''; A\cap V(Q), B\cap V(Q) )$ is a desired ear graft relative to $(G', T'; A', B')$ that meets the condition, 
where $T''$ is the set of vertices from $V(Q)$ that are connected to an odd number of edges from $E(Q)\cap F$.

For the remaining case with $f\in F$, $(G. f, \{u, v\}; \{u\}, \{v\})$ is a desired ear graft relative to $(G', T'; A', B')$.  
This completes the proof. 
\end{proof}

Lemma~\ref{lem:critical2increment} implies the next lemma.

\begin{lemma} \label{lem:critical2ear} 
Every critical quasicomb with root $r$ is a member of $\critical{r}$. 
\end{lemma}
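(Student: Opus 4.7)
The plan is to establish the lemma by an iterative construction, building up $(G, T; A, B)$ from the trivial member of $\critical{r}$ through successive attachments of effective ear grafts, with Lemma~\ref{lem:critical2increment} supplying the ear at each step. I fix a minimum join $F$ of $(G, T)$ and initialize with $(G_0, T_0; A_0, B_0) := ((\{r\}, \emptyset), \emptyset; \emptyset, \{r\})$. By Lemma~\ref{lem:critical}(i), $\parcut{G}{r} \cap F = \emptyset$, so the invariant $E_G[B_0, A \setminus A_0] \cap F = \emptyset$ required to invoke Lemma~\ref{lem:critical2increment} is established, as is the hypothesis $r \in B_0$.

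In the inductive step, given $(G_i, T_i; A_i, B_i) \in \critical{r}$ that is a proper subgraft of $(G, T; A, B)$ and satisfies the invariant, I invoke Lemma~\ref{lem:critical2increment} to produce an ear graft $(P, T'; A', B')$ relative to $(G_i, T_i; A_i, B_i)$ with $V(P) \setminus V(G_i) \neq \emptyset$, such that the invariant is inherited by the sum $(G_{i+1}, T_{i+1}; A_{i+1}, B_{i+1}) := (G_i, T_i; A_i, B_i) \oplus (P, T'; A', B')$. To place the sum in $\critical{r}$, it remains only to verify that $(P, T'; A', B')$ is an effective ear graft.

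This verification decomposes along the three cases in the proof of Lemma~\ref{lem:critical2increment}. In the two straight-ear cases, namely (A) a single non-$F$ edge from an existing $B_i$-vertex to a new $A$-vertex with $T' = \emptyset$, and (C) a single $F$-edge from an existing $A_i$-vertex to a new $B$-vertex with $T' = \{u, v\}$, conditions (i)--(iii) of the effective ear graft definition follow directly from the one-edge structure together with the prescribed $T'$. In the round-ear case (B), where $Q = vPw + f$ is built from an $F$-balanced path $P$ issuing from a new $B$-vertex $v$ and the extra non-$F$ edge $f$ incident to a second $A_i$-vertex $u$, Lemma~\ref{lem:critical}(iii) guarantees that $P$ is $F$-balanced; this forces every $B$-vertex of $Q$, including $v$ and the interior $B$-vertices of $P$, to be incident to exactly one edge of $E(Q) \cap F$ and hence to belong to the prescribed $T'$, which yields condition (i). Condition (ii) is immediate because the bonds $u, w$ both lie in $A_i \subseteq A$, and condition (iii) is vacuous because $Q$ is round.

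Since each iteration strictly enlarges $|V(G_i)|$ and $G$ is finite, the process terminates at some $(G_n, T_n; A_n, B_n) \in \critical{r}$ with $V(G_n) = V(G)$; any residual edges of $E(G) \setminus E(G_n)$ can then be attached individually as single-edge round ear grafts with $T' = \emptyset$, which are effective by inspection. I expect the main obstacle to be the effectiveness check in the round-ear case (B), where the $F$-balance of $P$, the parity of $F$-incidences along $Q$, and the resulting specification of $T'$ must all be reconciled to yield conditions (i) and (ii) simultaneously; a secondary subtlety is verifying that the accumulated symmetric difference of the $T'$s reproduces $T$ on the target graft, which ultimately rests on each relevant $F$-edge being absorbed into one of the ears supplied by the iteration.
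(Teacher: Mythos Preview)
Your proposal is correct and follows essentially the same approach as the paper: both arguments use Lemma~\ref{lem:critical2increment} to grow a subgraft in $\critical{r}$ while maintaining the invariant $E_G[B_i, A\setminus A_i]\cap F=\emptyset$, the paper phrasing this via a maximality contradiction and you via an explicit iteration. Your explicit verification that each of the three ear types produced in Lemma~\ref{lem:critical2increment} is effective, together with your treatment of residual edges and of the accumulated $T$ matching the target $T$, in fact fills in details that the paper's proof compresses into the single line ``This further implies $(G,T;A,B)\in\critical{r}$.''
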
 
\begin{proof} Let $(G, T; A, B)$ be a critical quasicomb with root $r$. 
We prove the lemma by induction on $|V(G)|$. 
If $| V(G) | = 1$, that is, $(G, T; A, B)$ is $( (\{r\}, \emptyset), \emptyset; \emptyset, \{r\})$, 
then  $(G, T; A, B)\in \critical{r}$ obviously holds. 
Next, let $|V(G)| > 1$, and assume that the statement holds for every critical quasicomb with a smaller number of vertices. 
Let $F$ be a minimum join of $(G, T; A, B)$. 
Let $(G', T', A', B')$ be a maximal subgraft of $(G, T; A, B)$ 
that is a member of $\critical{r}$  for which $E_G[B', A\setminus A'] \cap F = \emptyset$.

Suppose  $V(G') \subsetneq V(G)$. 
Lemma~\ref{lem:ear2critical} implies that $(G', T'; A', B')$ is a critical quasicomb with root $r$. 
Therefore, Lemma~\ref{lem:critical2increment} implies that 
$(G, T; A, B)$ has an ear graft $(P, T''; A'', B'')$ relative to $(G', T'; A', B')$ 
such that $V(P)\setminus V(G') \neq \emptyset$ and $E_G[B'', A\setminus (A' \cup A'')]\cap F = \emptyset$.   
Then, $(G', T'; A', B') \oplus (P, T''; A'', B'')$ is a member of $\critical{r}$ for which $E_G[B'\cup B'', A\setminus (A' \cup A'')]\cap F = \emptyset$.  
This contradicts the maximality of $(G', T'; A', B')$. 
Hence, we obtain $V(G') = V(G)$. 
This further implies $(G, T; A, B) \in \critical{r}$.  
The lemma is proved. 
\end{proof} 
 
Combining Lemmas~\ref{lem:ear2critical} and \ref{lem:critical2ear}, 
we now obtain Theorem~\ref{thm:critical2char}, the constructive characterization for critical quasicombs. 

\begin{theorem} \label{thm:critical2char} 
A bipartite graft with an ordered bipartition  is a critical quasicomb with root $r$ 
if and only if it is a member of $\critical{r}$. 
\end{theorem}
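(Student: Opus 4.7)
The plan is to derive the theorem as an immediate corollary of the two directional lemmas already established in this section. Specifically, Lemma~\ref{lem:ear2critical} handles the ``if'' direction, showing that every member of $\critical{r}$ is a critical quasicomb with root $r$, and Lemma~\ref{lem:critical2ear} handles the ``only if'' direction. The theorem itself is therefore a one-line combination of these two results, and I would simply cite both.

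For context on what each lemma delivers: Lemma~\ref{lem:ear2critical} proceeds by induction along the constructive definition of $\critical{r}$. The base graft $((\{r\},\emptyset),\emptyset;\emptyset,\{r\})$ is trivially critical. The inductive step takes $\hat{F} = F\cup F'$ from the minimum joins of the current graft and the attached effective ear graft, uses Lemma~\ref{lem:minimumjoin} together with a case analysis on the parities of the bonds and Lemma~\ref{lem:qcomb2path} to rule out negative-weight circuits (so $\hat{F}$ is a minimum join), and then extends the root-distance characterization of Lemma~\ref{lem:critical2dist} to the new vertices via Lemma~\ref{lem:ear2path}. Lemma~\ref{lem:critical2ear}, on the other hand, proceeds by induction on $|V(G)|$: one takes a maximal subgraft in $\critical{r}$ satisfying the invariant $E_G[B', A\setminus A']\cap F = \emptyset$, and Lemma~\ref{lem:critical2increment} supplies an effective ear graft that extends it whenever the subgraft is proper, forcing the invariant-preserving maximal subgraft to coincide with the whole of $(G,T;A,B)$.

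Since both lemmas are in hand, there is no substantive obstacle remaining at the level of the theorem. Any real difficulty was absorbed by Lemma~\ref{lem:critical2increment}, where one had to produce an extending ear graft while preserving the ``no $F$-edge leaves the current tooth set'' invariant, so that the maximality argument of Lemma~\ref{lem:critical2ear} cannot stall. Given the preparation, the proof reduces to: by Lemmas~\ref{lem:ear2critical} and~\ref{lem:critical2ear}.
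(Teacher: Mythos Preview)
Your proposal is correct and matches the paper's own proof exactly: the paper likewise obtains Theorem~\ref{thm:critical2char} simply by combining Lemmas~\ref{lem:ear2critical} and~\ref{lem:critical2ear}. Your summaries of those lemmas' arguments are also accurate.
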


We next provide Theorem~\ref{thm:ear2balanced}.

\begin{definition}
Let $(G, T; A, B) \in \critical{r}$. 
Let $\{ (G_i, T_i; A_i, B_i) \in \critical{r} : i = 1, \ldots, l\}$, where $l \ge 1$, be a family of grafts such that  
\begin{rmenum} 
\item $(G_1, T_1; A_1, B_1) = ( ( \{r\}, \emptyset), \emptyset; \emptyset, \{r\} )$, 
\item for each $i \in \{1, \ldots, l\} \setminus \{1\}$, 
$(G_{i+1}, T_{i+1}; A_{i+1}, B_{i+1}) = (G_i, T_i; A_i, B_i) \oplus (P_i, T_i'; A_i', B_i')$, 
where $(P_i, T_i'; A_i', B_i')$ is an effective ear graft relative to $(G_i, T_i; A_i, B_i)$, and 
\item $(G_l, T_l; A_l, B_l) = (G, T; A, B)$. 
\end{rmenum} 
We call the family $\{ (P_i, T_i'; A_i', B_i'): i= 1,\ldots, l\}$ a {\em graft ear decomposition} of $(G, T; A, B)$. 
\end{definition} 

\begin{definition} 
Let $(G, T; A, B)$ be a critical quasicomb with root $r\in B$, and let $F \subseteq E(G)$. 
Let  $\mathcal{P}$ be a family of bipartite grafts with 
 an  ordered bipartition that is a graft ear decomposition of $(G, T; A, B)$. 
We say that $\mathcal{P}$ is {\em $F$-balanced} if
 $F\cap E(P)$ is a minimum join for each $(P, T'; A', B') \in \mathcal{P}$. 
\end{definition}  

The next statement clearly follows from the proof of Lemma~\ref{lem:critical2ear}. 

\begin{theorem} \label{thm:ear2balanced}  
Let $(G, T; A, B)$ be a critical quasicomb with root $r\in B$ and $F$ be a minimum join.   
Then, there is a graft ear decomposition of $(G, T; A, B)$ that is $F$-balanced. 
\end{theorem}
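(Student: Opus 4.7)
The plan is to recast the maximality argument from the proof of Lemma~\ref{lem:critical2ear} as a greedy construction that produces the desired $F$-balanced graft ear decomposition step by step. Starting from the trivial initial graft $((\{r\},\emptyset), \emptyset; \emptyset, \{r\})\in \critical{r}$, I would iteratively use Lemma~\ref{lem:critical2increment} to extend the current subgraft $(G', T'; A', B')$ by an ear graft $(P, T''; A'', B'')$, maintaining the invariant that $(G', T'; A', B')\oplus (P, T''; A'', B'')$ still lies in $\critical{r}$ and still satisfies the $F$-avoidance property $E_G[B'\cup B'', A\setminus (A'\cup A'')]\cap F = \emptyset$. Termination at $V(G') = V(G)$ then follows by exactly the maximality reasoning of Lemma~\ref{lem:critical2ear}.

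The only new ingredient, relative to Lemma~\ref{lem:critical2ear}, is the verification that the ear graft $(P, T''; A'', B'')$ appended at each iteration is itself $F$-balanced --- that is, that $F\cap E(P)$ is a minimum join of $(P, T''; A'', B'')$. I would establish this by case analysis that mirrors the three constructions inside the proof of Lemma~\ref{lem:critical2increment}.

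In the two single-edge constructions the claim is immediate. When the ear consists of an edge $e\in E_G[B', A\setminus A']$, the defining invariant $E_G[B', A\setminus A']\cap F = \emptyset$ forces $F\cap\{e\}=\emptyset$, which matches the empty minimum join of the ear graft whose target set is empty. When the ear is an edge $f\in E_G[A', B\setminus B']\cap F$, we have $F\cap\{f\}=\{f\}$, which is the unique minimum join of the two-vertex ear graft with target $\{u,v\}$. In the remaining construction the ear is the round ear $Q = vPw + f$ obtained by extending $f\notin F$ along an $F$-shortest path $P$ from $v\in B\setminus B'$ to $r$, and its target $T''$ is defined precisely so that $F\cap E(Q)$ is a join of the ear graft by construction. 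Minimality then follows from Lemma~\ref{lem:minimumjoin}: when $Q$ is a path, no circuits are present in the ear graft; when $Q$ is itself a circuit (i.e.\ $u=w$), it suffices to establish $w_F(Q)\ge 0$.

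The main obstacle will be this circuit subcase. I would decompose $w_F(Q) = w_F(vPw) + w_F(f)$ and note $w_F(f) = 1$ because $f\notin F$. The subpath $vPw$ is a subpath of an $F$-shortest $v$-$r$ path in a critical quasicomb, so by Lemma~\ref{lem:critical} it is $F$-balanced; combined with $w\in A'\subseteq A$ (which is forced in this case by the defining invariant $E_G[B', A\setminus A']\cap F = \emptyset$ together with the $F$-balanced structure of $P$), Lemma~\ref{lem:qcomb2path} then pins $w_F(vPw)\in\{-1,+1\}$. Assembling these contributions yields $w_F(Q)\ge 0$, completing the verification and hence the proof.
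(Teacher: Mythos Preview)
Your proposal is correct and is exactly the argument the paper intends by ``clearly follows from the proof of Lemma~\ref{lem:critical2ear}'': you make the greedy construction explicit and supply the (necessary) verification that each of the three ear types produced inside Lemma~\ref{lem:critical2increment} has $F\cap E(P)$ as its minimum join. One small slip: in the circuit subcase, $w\in A'$ follows directly from the case hypothesis $E_G[B', A\setminus A']=\emptyset$ in Lemma~\ref{lem:critical2increment}, not from the weaker invariant $E_G[B', A\setminus A']\cap F=\emptyset$ you cite---but this does not affect the argument.
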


\begin{figure} 
\includegraphics[width=.7\textwidth]{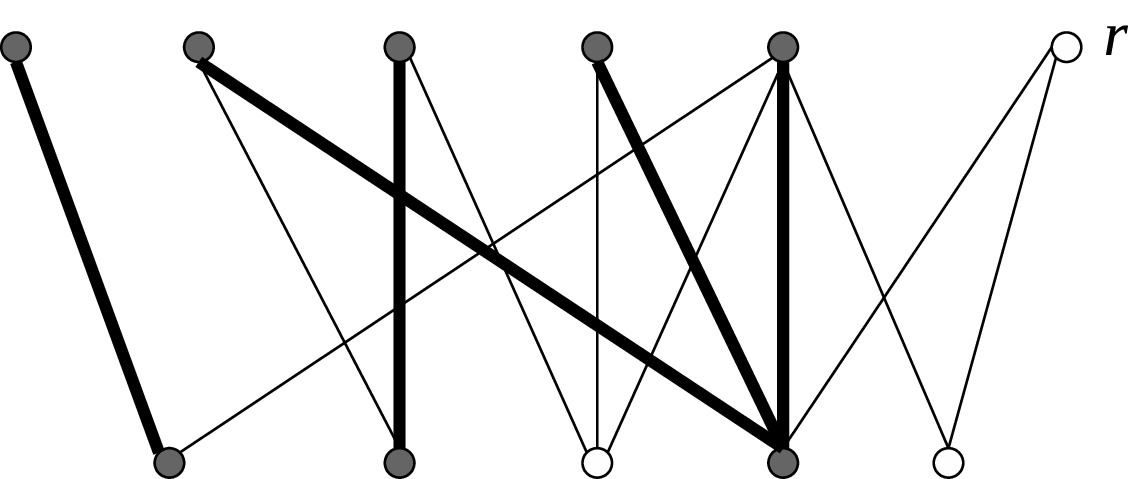} 
\caption{Critical quasicomb $(G, T; A, B)$ with root $r$ and one of its minimum joins $F$: 
The gray and white points indicate vertices in $T$ and $V(G)\setminus T$, respectively, 
and the upper and lower  vertices form color classes $B$ and $A$, respectively. 
Bold lines indicate edges in $F$.} 
\label{fig:crqcomb} \label{fig:join} 
\end{figure}

\begin{figure} 
\includegraphics[width=.3\textwidth]{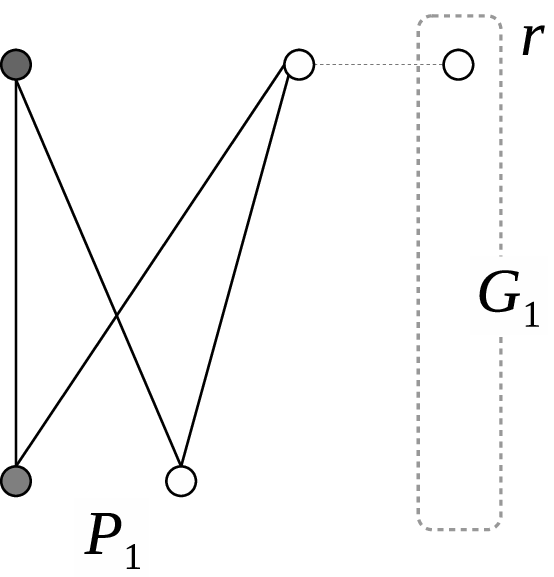} 
\caption{Graft $(G_1, T_1; A_1, B_1)$ and  ear graft $(P_1, T_1'; A_1', B_1')$. } 
\label{fig:g1p1} 
\end{figure} 

\begin{figure} 
\includegraphics[width=.4\textwidth]{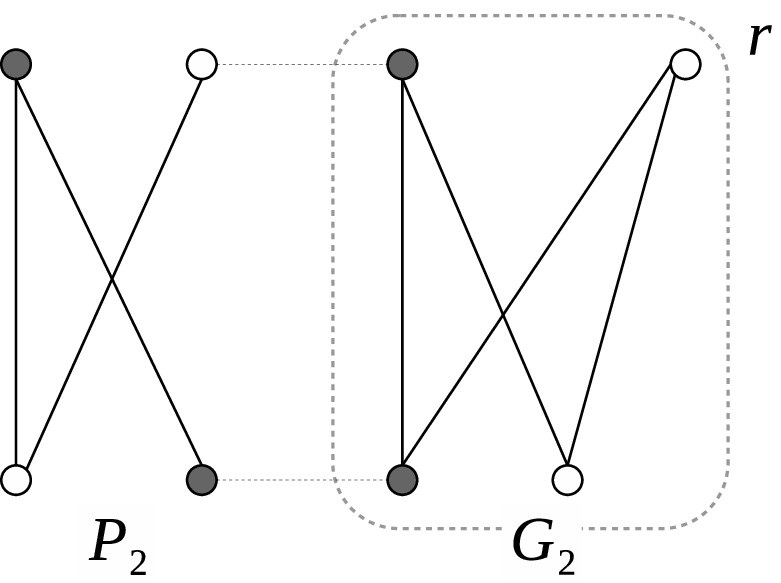} 
\caption{Graft $(G_2, T_2; A_2, B_2)$ and  ear graft $(P_2, T_2'; A_2', B_2')$. } 
\label{fig:g2p2} 
\end{figure} 

\begin{figure} 
\includegraphics[width=.7\textwidth]{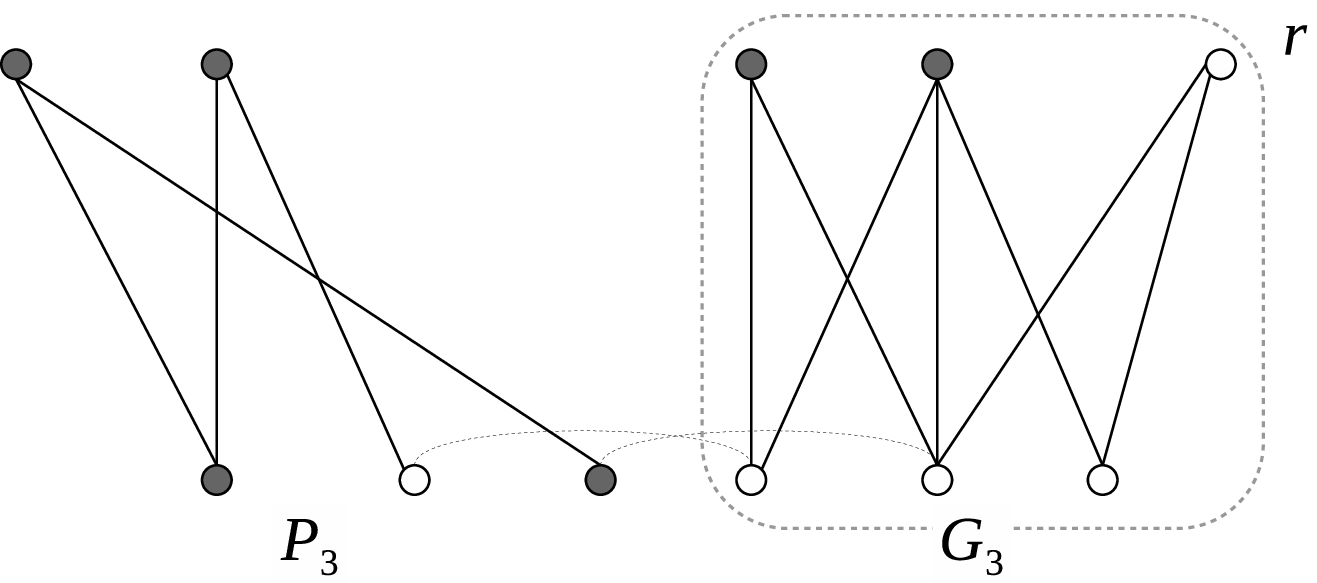} 
\caption{Graft $(G_3, T_3; A_3, B_3)$ and  ear graft $(P_3, T_3'; A_3', B_3')$. } 
\label{fig:g3p3} 
\end{figure} 

\begin{figure} 
\includegraphics[width=.7\textwidth]{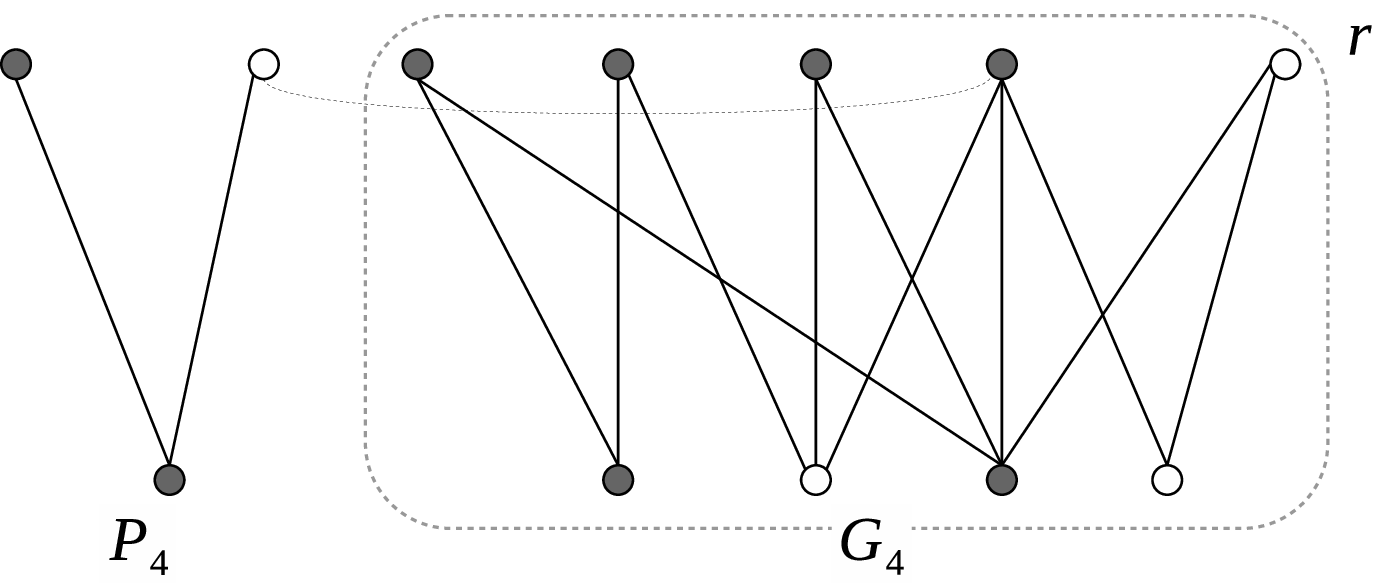} 
\caption{Graft $(G_4, T_4; A_4, B_4)$ and  ear graft $(P_4, T_4'; A_4', B_4')$. } 
\label{fig:g4p4} 
\end{figure}

\begin{example} 
Figures~\ref{fig:g1p1} to \ref{fig:g4p4} illustrate an ear decomposition $\{(P_i, T_i'; A_i', B_i'): i = 1,\ldots, 4\}$  
of the critical quasicomb $(G, T; A, B)$  from Figure~\ref{fig:join}.  
For each $i \in \{1,\ldots, 4\}$,  $(G_i, T_i; A_i, B_i)$ 
is a critical quasicomb with root $r$, and $(P_i, T_i'; A_i', B_i')$ is an ear graft relative to it. 
It holds that  $(G_{i+1}, T_{i+1}; A_{i+1}, B_{i+1}) = (G_i, T_i; A_i, B_i) \oplus (P_i, T_i'; A_i', B_i')$,  
 where $(G_5, T_5; A_5, B_5) := (G, T; A, B)$. 
As in Figure~\ref{fig:join}, 
lower points indicate vertices in $A_i$ or $A_i'$, whereas upper points indicate vertices in $B_i$ or $B_i'$. 
Gray vertices indicate those from $T_i$ or $T_i'$, whereas white vertices indicate the others.  
Each dotted line shows a pair of vertices that are identified through the addition. 
As can be seen, this ear decomposition is $F$-balanced with respect to the minimum join $F$ from Figure~\ref{fig:join}. 
\end{example}

\section{Conclusion} \label{sec:conclusion} 

We introduced critical bipartite grafts as a bipartite graft analogue of factor-critical graphs 
and provided a constructive characterization of this class of grafts, 
which is an analogue of Lov\'asz' theorem~\cite{lovasz1972note, lp1986, kv2008, schrijver2003} for factor-critical graphs. 
Our main results will be used to derive  
a generalization of the Dulmage-Mendelsohn canonical decomposition for grafts~\cite{kita2021bipartite}.

\begin{ac} 
This study was supported by JSPS KAKENHI Grant Number 18K13451. 
\end{ac}

\bibliographystyle{splncs03.bst}
\bibliography{tbicath.bib}

\end{document}